\documentclass{amsart}
\pdfoutput=1 
\usepackage[svgnames]{xcolor}
\usepackage{amssymb}
\usepackage{tikz-cd}
\usepackage{microtype}
\usepackage[unicode,
  colorlinks=true,
  linktocpage=true,
  citecolor=ForestGreen,
  linkcolor=MediumOrchid,
  urlcolor=MediumOrchid,
  pdftitle={The Decomposition Space Perspective},
  pdfauthor={Philip Hackney},
  pdfkeywords={decomposition space, 2-Segal space}]{hyperref}
\usepackage[capitalise]{cleveref}
\usepackage{enumitem}
\usepackage{mathtools}
\usepackage{comment}
\usepackage{cite}
\frenchspacing

\newtheorem{theorem}{Theorem}[section]
\newtheorem{lemma}[theorem]{Lemma}
\newtheorem{proposition}[theorem]{Proposition}
\newtheorem{corollary}[theorem]{Corollary}
\newtheorem*{pastinglaw}{Pasting Law for Pullbacks}
\newtheorem*{retrstab}{Retract Stability}
\theoremstyle{definition}
\newtheorem{definition}[theorem]{Definition}
\newtheorem{example}[theorem]{Example}
\newtheorem{convention}[theorem]{Convention}

\newtheorem{exercise}[theorem]{Exercise}
\theoremstyle{remark}
\newtheorem{remark}[theorem]{Remark}
\newtheorem{diversion}[theorem]{Diversion}

\newcommand{\set}{\mathsf{Set}}
\newcommand{\spaces}{\mathsf{Space}}
\newcommand{\sset}{\mathsf{sSet}}
\newcommand{\sspace}{\mathsf{sSpace}}

\newcommand{\fun}{\mathsf{Fun}}
\newcommand{\catpar}{\mathsf{Par}}
\newcommand{\op}{\textup{op}}
\newcommand{\actrm}{\textup{act}}
\newcommand{\intrm}{\textup{int}}
\newcommand{\delact}{\Delta_\actrm}
\newcommand{\delint}{\Delta_\intrm}
\newcommand{\udec}{\textup{dec}_{\top}}
\newcommand{\ldec}{\textup{dec}_{\perp}}
\DeclareMathOperator{\sd}{sd}
\DeclareMathOperator{\id}{id}
\DeclareMathOperator*{\colim}{colim}

\usepackage[old]{old-arrows}
\usepackage[only,mapsfromchar,mapsfrom]{stmaryrd}
\newcommand{\ract}{\varrightarrow\mapsfromchar}
\newcommand{\rint}{\rightarrowtail}

\tikzset{
  act /.tip = >|
}

\usepackage[
textwidth=3cm,
textsize=small,
colorinlistoftodos,disable]
{todonotes}

\newcommand{\phnote}[1]{\todo[color=purple!40,linecolor=purple!40!black,size=\tiny]{#1}}

\begin{document}

\title{The Decomposition Space Perspective}
\author{Philip Hackney}
\address{Department of Mathematics, University of Louisiana at Lafayette}
\email{philip@phck.net}
\thanks{This work was supported by a grant from the Simons Foundation (\#850849) and by the Louisiana Board of Regents through the Board of Regents Support fund LEQSF(2024-27)-RD-A-31.}

\date{May 9, 2025; visit \url{https://github.com/phck/decomp-space} for updates.}

\keywords{decomposition space, 2-Segal space}

\subjclass[2020]{Primary: 
18N50, 
Secondary: 
18F20, 
55U10, 
18N60} 

\begin{abstract}
This paper provides an introduction to decomposition spaces and 2-Segal spaces, unifying the two perspectives.
We begin by defining decomposition spaces using the active-inert factorization system on the simplicial category, and show their equivalence to 2-Segal spaces. 
Key results include the path space criterion, which characterizes decomposition spaces in terms of their upper and lower décalages, and the edgewise subdivision criterion. 
We also introduce free decomposition spaces arising from outer face complexes, providing a rich source of examples. 
Formal prerequisites are minimal -- readers should have a working knowledge of simplicial methods and basic category theory.
\end{abstract}

\maketitle

\section{Introduction}

Decomposition spaces were introduced by Imma G\'alvez-Carrillo, Joachim Kock, and Andrew Tonks in a long preprint in 2014, which now comprises six papers \cite{GKT1,GKT2,GKT3,GKT:DSC,GKT:DSRS,GKT:HLA}.
Their goals were to develop a very general framework for incidence algebras and Möbius inversion that could unify and extend various examples arising in algebraic combinatorics.
Further developments along these lines came later, including consideration of antipodes \cite{CarlierKock:AMDS}, incorporation of Schmitt's hereditary species into the picture \cite{Carlier:HSMDS}, investigation of the universality of Lawvere's Hopf algebra of M\"obius intervals \cite{Forero:GCKTCLDDS}, and the introduction of a new type of species governing other known Hopf algebras \cite{CebrianForero:DHSDS}.

The aim of this paper is to give a gentle introduction to decomposition spaces, similar to Stern's introduction to 2-Segal spaces \cite{Stern:BIRS}.
The theory of decomposition spaces encourages us to reduce as much as possible to a calculus of pullback squares, and we try to follow this principle whenever feasible.
All told, the prerequisites for this work are mild, and similar to \cite{Stern:BIRS}: some elementary category theory, and some comfort with simplicial sets (but see \cref{sec ways to read}).

Decomposition spaces are equivalent to the 2-Segal spaces of Dyckerhoff and Kapranov \cite{DyckerhoffKapranov:HSS}. 
This was first observed for \emph{unital} 2-Segal spaces, as described in \cite[p.~959]{GKT1}:

\begin{quote}
The coincidence of the notions was noticed by Mathieu Anel because two of the basic results are the same: specifically, the characterisation in terms of decalage and Segal spaces \dots{} and the result that the Waldhausen $S_\bullet$-construction of a stable $\infty$-category is a decomposition space \dots{}
\end{quote}
Later on it was realized by Feller, Garner, Kock, Proulx, and Weber \cite{Feller_et_al:E2SSU} that, in fact, the unitality condition is automatic, providing the missing ingredient to see that 2-Segal spaces and decomposition spaces are the same. 
We will give an account of the equivalence in \cref{sec 2-Segl decomp}, following \cite{GKT1,Feller_et_al:E2SSU}.

The next three sections (\S\ref{sec segal spaces}, \S\ref{sec path space}, \S\ref{sec edgewise}) are all about relationships between decomposition spaces and Segal spaces.
The first of these is simple: decomposition spaces are meant to be a generalization of Segal spaces.
We establish that each Segal space is indeed a decomposition space, and that Segal spaces are characterized among the decomposition spaces by checking whether a single square is a pullback.
In \cref{sec path space} we turn to the \emph{path space criterion}, which says that a simplicial space is a decomposition space if and only if both of its d\'ecalages/path spaces are Segal spaces.
Finally, in \cref{sec edgewise} we prove the closely related edgewise subdivision criterion, which says that $X$ is a decomposition space if and only if $\sd X$ is a Segal space.

The last section of the paper describes a source of examples of decomposition spaces, namely the simplicial sets associated to outer face complexes.

There is fundamental material about decomposition spaces that is not covered in depth here, but can be found in the original sources and in other papers in this volume.

\subsection{Ways to read this paper}\label{sec ways to read}
In following the original sources \cite{GKT1,GKT2,GKT3}, we have elected mostly to phrase everything within the $(\infty,1)$-category of simplicial \emph{spaces}.\footnote{For concreteness, `$(\infty,1)$-category' may be interpreted to mean `quasi-category' / `$\infty$-category.' 
That said, as in the original sources (see \cite[1.3]{GKT1}), we work in a model-independent, synthetic fashion, and use nothing specific to quasi-categories.}
But knowing about $(\infty,1)$-categories and their limits and such is not a prerequisite for this paper.
Readers may safely replace ``spaces'' with ``sets'' and replace any infinity-categorical pullbacks/limits with ordinary pullbacks/limits.
In more detail, any of the following are valid ways to read this paper:
\begin{enumerate}[left=0pt]
\item Focus solely on the category $\sset \coloneqq \fun(\Delta^\op, \set)$ of simplicial sets. 
This approach requires basic familiarity with simplicial sets as combinatorial objects (but nothing of their homotopy theory), as well as basic notions from category theory (functors, pullbacks, etc.).
\label{item sset}
\item All statements before \cref{sec free} also hold, using the same arguments, for simplicial objects in a 1-category $C$.\footnote{Typically, one would require that $C$ appearing in \eqref{item gen cat} and \eqref{item gen inf cat} has finite limits.
This assumption can be dropped for what we do in this paper, by instead asking that certain diagrams arising from simplicial objects \emph{are} limit diagrams.}
Particular examples may not make sense in this generality.
\label{item gen cat}
\item Work in the $(\infty,1)$-category $\sspace \coloneqq \fun(\Delta^\op, \spaces)$ of space-valued presheaves on $\Delta$.
Here $\spaces$ denotes the $(\infty,1)$-category of spaces (or $\infty$-groupoids).
See \cite{GKT1} for details.
We emphasize that any pullbacks or limits that appear are necessarily infinity-categorical pullbacks or limits (see \cref{conv pullback}).
\item 
One could also work with simplicial objects in an arbitrary $(\infty,1)$-category $C$.
\label{item gen inf cat}
\end{enumerate}
We will altogether avoid discussing anything from the perspective of Quillen model categories.
A number of sources, especially on the 2-Segal side of things, give model categorical accounts of some of what appears below (see, for instance, \cite{DyckerhoffKapranov:HSS,Feller:Q2SS,BOORS:ESC}).

\subsection*{Acknowledgements}
This paper originated from my talks at the workshop ``Higher Segal Spaces and their Applications to Algebraic K-Theory, Hall Algebras, and Combinatorics'' held at the Banff International Research Station (BIRS) in January 2024.
I thank the workshop organizers (Julie Bergner, Joachim Kock, Maru Sarazola) for cultivating a vibrant and stimulating atmosphere and giving me the opportunity to speak, and BIRS for providing an ideal venue for the event.
I am grateful to the other participants for their contributions and discussions.
Special thanks go to Julie Bergner, Matt Feller, Joachim Kock, Justin Lynd, Felix Naß, and Walker Stern.

\section{The active-inert factorization system on the simplicial category}\label{sec act inert fact}

The \emph{simplicial category} $\Delta$ has objects $[n] = \{0 < 1 < \cdots < n\}$ for $n\geq 0$, and morphisms are the (weakly) order preserving maps (if $i \leq j$, then $\alpha(i) \leq \alpha(j)$).
Let us recall the standard generating maps for this category.
For each $i = 0, 1, \dots, n$, there is a \emph{coface map} $\delta^i \colon [n-1] \to [n]$ which is injective and does not have $i$ in its image, and a \emph{codegeneracy map} $\sigma^i \colon [n+1] \to [n]$ which is surjective with the preimage of $i$ having cardinality two. 
In formulas:
\begin{align*}
  \delta^i(k) &= \begin{cases}
    k & k \leq i-1 \\
    k+1 & k \geq i
  \end{cases}
  &
  \sigma^i(k) &= \begin{cases}
    k & k \leq i \\
    k-1 & k \geq i+1.
  \end{cases}
\end{align*}
A \emph{simplicial object} in category $C$ is a functor $X \colon \Delta^\op \to C$ which is written as $[n] \mapsto X_n$ and $(\alpha \colon [n] \to [m]) \mapsto (\alpha^* \colon X_m \to X_n)$.
As usual, we will write $d_i \colon X_n \to X_{n-1}$ for the image of $\delta^i$, and $s_i \colon X_n \to X_{n+1}$ for the image of $\sigma^i$; these are called \emph{face} and \emph{degeneracy} operators and lead to the usual picture of a simplicial object $X$:
\begin{equation}\label{diag simplicial object} \begin{tikzcd}
X_0 \rar["s_0" description] & X_1 \lar[shift left=2, "d_0"] \lar[shift right=2, "d_1"']  \rar[shift left=1.5] \rar[shift right=1.5] & X_2 \lar[shift left=3,"d_0"] \lar[shift right=3,"d_2"'] \lar
\rar[shift left=3] \rar[shift right=3] \rar &
X_3 
\lar[shift left=1.5] \lar[shift left=4.5] \lar[shift right=1.5] \lar[shift right=4.5] \cdots
\end{tikzcd} \end{equation}
These basic maps define a simplicial object if and only if they satisfy the usual \emph{simplicial identities}
\begin{align*}
d_i d_j &= d_{j-1} d_i \text{ if  $i < j$} \\
s_i s_j &= s_{j+1} s_i \text{ if $i\leq j$} \\
d_i s_j &= \begin{cases}
  s_{j-1} d_i & \text{if $i < j$} \\
  \id & \text{if $i=j$ or $i=j+1$} \\
  s_j d_{i-1} & \text{if $i > j+1$.}
\end{cases}
\end{align*}
Vertical reflection of the picture \eqref{diag simplicial object} yields the following contrivance, which will allow us to infer some statements by duality.

\begin{definition}
If $X$ is a simplicial object, then its \emph{opposite} $X^\op$ has $(X^\op)_n = X_n$ together with the following face and degeneracy operators: \begin{align*} \big( d_k^{\op} \colon X^\op_n \to X^\op_{n-1} \big) &= \big( d_{n-k} \colon X_n \to X_{n-1} \big)
\\
\big( s_k^{\op} \colon X^\op_n \to X^\op_{n+1} \big) &= \big( s_{n-k} \colon X_n \to X_{n+1} \big).
\end{align*}
More formally, there is an endofunctor $\Delta \to \Delta$ which sends $f \colon [n] \to [m]$ to $g \colon [n] \to [m]$ with $g(i) = m-f(n-i)$.
The functor $X^\op \colon \Delta^\op \to \spaces$ is given by precomposing $X$ with this endofunctor.
\end{definition}

We say that a morphism $\alpha \colon [n] \to [m]$ in $\Delta$ is \emph{active} if it is endpoint preserving, that is, if $\alpha(0) = 0$ and $\alpha(n) = m$.
Active maps will denoted by arrows with right arrow to bar 
$\alpha \colon [n] \ract [m]$.
We write $\delact \subset \Delta$ for the subcategory containing the same objects as $\Delta$, but with only the active maps.
A subcategory containing all objects of the larger category is called \emph{wide}.
Notice that, for each $n$, there is a \emph{unique} active map $[1] \ract [n]$.

A morphism $\alpha \colon [n] \to [m]$ in $\Delta$ is \emph{inert} if it is distance preserving, that is, if $\alpha(i+1) = \alpha(i) + 1$ for all $i\in [n]$.
Alternatively, we have $\alpha(\text{-}) = (\text{-}) + c$, where the constant $c$ is $\alpha(0)$.
Inert maps will be written with right arrow tail $\alpha \colon [n] \rint [m]$, and the wide subcategory of inert maps will be denoted $\delint \subset \Delta$.

The only generating morphisms of $\Delta$ which are inert are the \emph{outer coface maps} $\delta^0, \delta^{n} \colon [n-1] \to [n]$.
The codegeneracy maps and the inner coface maps $\delta^i \colon [n-1] \to [n]$ for $0 < i < n$ are active.
The outer face maps in a simplicial set are important enough in what follows that we give them special names: we write
\begin{align*}
  d_\bot \coloneqq d_0 &\colon X_n \to X_{n-1} \\
  d_\top \coloneqq d_n &\colon X_n \to X_{n-1}
\end{align*}
for the bottom and top face maps.
(We also have bottom and top degeneracy maps $s_\bot = s_0$ and $s_\top = s_n \colon X_n \to X_{n+1}$.)

Given an arbitrary map $\alpha \colon [n] \to [m]$ in $\Delta$, there is a unique factorization of $\alpha$ as an active map followed by an inert map.
Namely, setting $p= \alpha(n) - \alpha(0)$, the map $\alpha$ factors as 
\[ \begin{tikzcd}[sep=huge]
{[n]} \ar[rr, bend left=20, "\alpha"] \rar[-act, "\alpha(\text{-}) - \alpha(0)"'] & {[p]} \rar[tail, "(\text{-}) + \alpha(0)"'] & {[m].}
\end{tikzcd} \]
The first map applies $\alpha$ and then subtracts the constant $\alpha(0)$, while second map adds the constant $\alpha(0)$.
It is immediate that the first map is active and the second map is inert, and that the composite of the two is equal to $\alpha$.
A bit more thought will show that this is the \emph{only} active-inert factorization of $\alpha$, and we arrive at the following.
\begin{lemma}
The pair of subcategories $(\delact, \delint)$ constitutes a factorization system on $\Delta$. \qed
\end{lemma}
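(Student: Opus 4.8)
The plan is to check the definition of an (orthogonal) factorization system directly. Recall that such a system consists of a pair of wide subcategories $(\mathcal{E},\mathcal{M})$ of a category $\mathcal{C}$ for which every morphism of $\mathcal{C}$ factors as $m\circ e$ with $e\in\mathcal{E}$ and $m\in\mathcal{M}$, and for which such a factorization is unique up to unique isomorphism (equivalently, each morphism of $\mathcal{E}$ is left orthogonal to each morphism of $\mathcal{M}$). That $\delact$ and $\delint$ are wide subcategories is immediate: the defining conditions $\alpha(0)=0,\ \alpha(n)=m$ (active) and $\alpha(i+1)=\alpha(i)+1$ (inert) are each evidently preserved under composition and satisfied by identities, hence by all isomorphisms, the only isomorphisms in $\Delta$ being identities. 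The existence of an active-then-inert factorization of an arbitrary $\alpha$ has already been exhibited explicitly above. So the only remaining task is to prove uniqueness of that factorization.

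First I would fix $\alpha\colon[n]\to[m]$ and suppose it is written as a composite $\alpha=\mu\circ\epsilon$ with $\epsilon\colon[n]\ract[k]$ active and $\mu\colon[k]\rint[m]$ inert. Since $\mu$ is inert it has the form $\mu(j)=j+c$ with $c=\mu(0)$; in particular $\mu$ is injective and $\mu(k)=k+c$. Since $\epsilon$ is active, $\epsilon(0)=0$ and $\epsilon(n)=k$. Evaluating $\alpha$ at the two endpoints of $[n]$ then pins down the middle object and the inert factor: $\alpha(0)=\mu(\epsilon(0))=\mu(0)=c$, and $\alpha(n)=\mu(\epsilon(n))=\mu(k)=k+c=k+\alpha(0)$, so $c=\alpha(0)$ and $k=\alpha(n)-\alpha(0)$. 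Thus $\mu$ is exactly the inert map $(\text{-})+\alpha(0)$ from the factorization constructed above, and since $\mu$ is injective the active factor is forced as well: $\epsilon(i)=\alpha(i)-\alpha(0)$. Hence the active-then-inert factorization of $\alpha$ is literally unique --- there is nothing to quotient by, as $\Delta$ has no nonidentity isomorphisms --- which in particular gives uniqueness up to unique isomorphism.

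To finish, I would appeal to the standard fact that, given a factorization of every morphism into two wide subcategories, uniqueness of the factorization (up to unique isomorphism) is equivalent to the orthogonality relation $\delact\perp\delint$; this promotes the argument above to the full assertion that $(\delact,\delint)$ is a factorization system. I do not expect any real obstacle here --- the lemma is essentially bookkeeping. The one place that rewards a moment's attention is the handling of the middle object $[k]$ in the uniqueness step: it is determined by the images of the two endpoints, after which injectivity of inert maps is exactly what forces the active factor. (A reader who prefers not to invoke the equivalence with orthogonality can instead verify the unique-diagonal-filler condition by hand: in a commuting square with an active map on the left edge and an inert map on the right edge, factor the top and bottom edges and then use uniqueness of the active-inert factorization of the two equal composites around the square to produce the diagonal and to see that it is the only one.)
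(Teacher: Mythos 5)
Your argument is correct and follows essentially the same route as the paper: it uses the same explicit factorization $\alpha = \big((\text{-})+\alpha(0)\big)\circ\big(\alpha(\text{-})-\alpha(0)\big)$ exhibited in the text, and your endpoint-evaluation argument is exactly the ``bit more thought'' the paper alludes to for uniqueness. The observation that $\Delta$ has only identity isomorphisms, so strict uniqueness already gives the orthogonal notion, is likewise the point the paper makes in its discussion of strict versus orthogonal factorization systems.
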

An \emph{orthogonal factorization system}\footnote{The name `orthogonal' comes from the following lifting property: for every commutative square  $\begin{tikzcd}[cramped, sep=small, ampersand replacement=\&] \cdot \dar[swap]{\ell} \rar \& \cdot \dar{r} \\ \cdot \rar \urar[dotted] \& \cdot  \end{tikzcd}$  with $\ell \in L$ and $r\in R$, there exists a unique dotted map making both triangles commute.} on a category $C$ consists of a pair of wide subcategories $(L,R)$ such that every map in $C$ factors as $r\circ \ell$ for some $r\in R$ and $\ell \in L$, this factorization is unique up to unique isomorphism, and $L\cap R$ contains all isomorphisms.
A \emph{strict factorization system} is similar, except that we only require one condition of the wide subcategories: every map factors \emph{uniquely} as $r\circ \ell$.
In general, the two notions are not directly comparable (though there is an enveloping construction for strict factorization systems \cite[2.1]{Grandis:WSEMCC}, \cite[3.1]{RosebrughWood:DLF}), but in our case there is no ambiguity as the only isomorphisms in $\Delta$ are identities.
Generalizations of this factorization system to other shape categories are an essential component of Segalic approaches to higher structures \cite{Berger:MCO,ChuHaugseng:HCASC,Hackney:SCGO} (see \cref{def Segal} below).

In the next section, we will make use of a special property of $\Delta$, which is that active-inert pushouts exist.
\begin{lemma}\label{lem ai pushout}
Suppose $\alpha \colon [n] \ract [m]$ is active and $\iota \colon [n] \rint [k]$ is inert.
Then there is a pushout square
\begin{equation}\label{diag ai pushout} \begin{tikzcd}
{[n]} \rar[-act, "\alpha"] \dar[tail, "\iota"'] \ar[dr, phantom, "\ulcorner" very near end] & {[m]} \dar[tail, "\theta"] \\
{[k]} \rar[-act, "\phi"'] & {[p]}
\end{tikzcd} \end{equation}
in $\Delta$, with $\theta$ inert and $\phi$ active. \qed
\end{lemma}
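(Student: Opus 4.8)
The plan is to construct the square by hand, using the rigidity of inert maps. Since $\iota$ is inert it is a shift, $\iota(i) = i + c$ with $c \coloneqq \iota(0)$; so $\iota$ identifies $[n]$ with the middle sub-interval $\{c, c+1, \dots, c+n\}$ of $[k]$, and $[k]$ is assembled from that middle block together with a bottom block $\{0, \dots, c\}$ and a top block $\{c+n, \dots, k\}$, overlapping only at their shared endpoints. The square should then perform ``surgery'': replace the middle block $[n]$ of $[k]$ by $[m]$ along $\alpha$, leaving the two flanking blocks untouched. Accordingly I would set $p \coloneqq k + m - n$, take $\theta \colon [m] \rint [p]$ to be the shift $\theta(j) = j + c$ (visibly inert, and with image inside $[p]$ because $c + n \le k$), and define $\phi \colon [k] \ract [p]$ piecewise by $\phi(i) = i$ for $i \le c$, by $\phi(i) = c + \alpha(i - c)$ for $c \le i \le c + n$, and by $\phi(i) = i + (m - n)$ for $i \ge c + n$. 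The clauses agree on the two overlaps because $\alpha(0) = 0$ and $\alpha(n) = m$; one then checks routinely that $\phi$ is order-preserving and active, and that $\theta \alpha = \phi \iota$ (both composites send $j \in [n]$ to $c + \alpha(j)$).

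The one point worth care is the universal property, and the observation that streamlines it is that $\phi$ is \emph{surjective}: as $\alpha$ is active it is already onto $[m]$, so $\phi$ surjects onto the bottom block, onto $\{c, \dots, c+m\}$, and onto the top block, i.e. onto all of $[p]$. Surjectivity of $\phi$ immediately forces uniqueness of any comparison map out of $[p]$, so only existence remains. Given $f \colon [m] \to [q]$ and $g \colon [k] \to [q]$ with $f\alpha = g\iota$, I would define $h \colon [p] \to [q]$ by $h(y) \coloneqq g(x)$ for any $x \in [k]$ with $\phi(x) = y$. This is well defined: the only fibres of $\phi$ that are not singletons lie in the middle block, where they are exactly the $\iota$-images of the fibres of $\alpha$, and on such a fibre the hypothesis $g\iota = f\alpha$ makes $g$ constant. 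Since $\phi$ is a monotone surjection of finite linear orders its fibres are consecutive intervals, so $h$ inherits monotonicity from $g$; and $h\phi = g$ holds by construction, while $h\theta = f$ follows by picking, for each $j \in [m]$, a preimage $j' \in \alpha^{-1}(j)$ and noting $\phi(c + j') = \theta(j)$, whence $h(\theta(j)) = g(\iota(j')) = f(\alpha(j')) = f(j)$.

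I do not anticipate any real obstacle: everything reduces to careful index bookkeeping across the three blocks, with the cocone hypothesis $f\alpha = g\iota$ entering at precisely one place -- the well-definedness of $h$ -- and with surjectivity of $\phi$ both pinning down the value $p = k + m - n$ and trivializing uniqueness. An alternative, more structural route would be to exhibit $[k]$ as an ordinal-sum-type gluing of its three blocks and appeal to the functoriality of that gluing, but the explicit computation above appears to be the shortest path.
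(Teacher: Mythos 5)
Your construction of the square itself is exactly right, and it matches the formulas the paper has in mind (the paper leaves the verification as an exercise, recording only $p=k-n+m$, $\theta(t)=t+c$, and the same three-clause formula for $\phi$). The problem is in your verification of the universal property, which rests on a false premise: you assert that ``as $\alpha$ is active it is already onto $[m]$.'' Active means \emph{endpoint-preserving}, not surjective; the inner coface $\delta^1\colon[1]\to[2]$ is active but misses $1$. Consequently $\phi$ is in general \emph{not} surjective --- its image is $\{0,\dots,c\}\cup\bigl(c+\operatorname{im}\alpha\bigr)\cup\{c+m,\dots,p\}$, which omits $c+t$ for every $t\in[m]\setminus\operatorname{im}\alpha$. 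This breaks your argument at every point where surjectivity is used: uniqueness of the comparison map no longer follows from $h\phi=g$ alone; your candidate $h(y)\coloneqq g(x)$ for $\phi(x)=y$ is simply undefined at the missed points; and the verification of $h\theta=f$ cannot proceed by ``picking $j'\in\alpha^{-1}(j)$,'' since that fibre may be empty. (Concretely, with $n=1$, $\alpha=\delta^1\colon[1]\ract[2]$, $\iota=\mathrm{id}$, your recipe never assigns a value at the middle vertex of $[p]=[2]$.)

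The repair is straightforward and worth writing out, because it shows where the cocone datum $f$ is genuinely needed to \emph{define} $h$ rather than merely to check well-definedness. Uniqueness follows not from surjectivity of $\phi$ but from \emph{joint} surjectivity: $\operatorname{im}\theta=\{c,\dots,c+m\}$ together with the bottom and top blocks of $\operatorname{im}\phi$ covers $[p]$, and $h$ is forced on these pieces by $h\theta=f$ and $h\phi=g$. For existence, define $h$ piecewise: $h(y)=g(y)$ for $y\le c$, $h(y)=f(y-c)$ for $c\le y\le c+m$, and $h(y)=g(y-m+n)$ for $y\ge c+m$. The clauses agree at the seams because $g(c)=g(\iota(0))=f(\alpha(0))=f(0)$ and $f(m)=f(\alpha(n))=g(\iota(n))=g(c+n)$, so $h$ is monotone, and the identities $h\theta=f$ and $h\phi=g$ are then direct case checks (the middle case of $h\phi=g$ again invoking $f\alpha=g\iota$). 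With this replacement the rest of your write-up --- the block decomposition of $[k]$, the definition of $\theta$ and $\phi$, and the checks that the square commutes with $\theta$ inert and $\phi$ active --- goes through as stated.
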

For brevity we leave the proof as an exercise for the reader; 
see \cite[2.6]{GKT1} for this fact, or \cite[\S3]{ConstantinFritzPerroneShapiro:WCPSS} for a discussion of \emph{all} pushout squares which exist in $\Delta$.
Concrete formulas for $\theta$ and $\phi$ allow one to see that \eqref{diag ai pushout} is also a pullback square.

\section{Decomposition spaces and 2-Segal spaces}\label{sec decomp vs 2-segal}

With active-inert pushouts in hand, we are ready to define decomposition spaces \cite[\S3]{GKT1}.
While categories encode associative and unital \emph{composition} of morphisms, decomposition spaces shift attention to possible \emph{decompositions} of morphisms, requiring these to be associative and unital in an appropriate sense.
This idea can be used to encode a number of coalgebras which come from decomposition of combinatorial structures into smaller pieces (see \cref{ex forests}) via a general incidence coalgebra construction \cite{CooperYoung:BIRS,GKT1,GKT:DSC}.

\begin{definition}[Decomposition space]\label{def decomp space}
A simplicial space $X$ is a \emph{decomposition space} if it sends every active-inert pushout square to a pullback square.
\end{definition}

In more detail, given a pushout square on the left below (as in \eqref{diag ai pushout} of \cref{lem ai pushout}), the associated square on the right is a pullback square of spaces.
\[
\begin{tikzcd}
{[n]} \rar[-act, "\alpha"] \dar[tail, "\iota"'] \ar[dr, phantom, "\ulcorner" very near end] & {[m]} \dar[tail, "\theta"] 
\ar[drr,phantom,"\rightsquigarrow"]
& & 
X_p \rar{\phi^*} \dar[swap]{\theta^*} 
\ar[dr, phantom, "\lrcorner" very near start]
& X_k \dar{\iota^*} 
\\
{[k]} \rar[-act, "\phi"'] & {[p]} & & 
X_m \rar[swap]{\alpha^*} & X_n
\end{tikzcd}
\]

\begin{convention}\label{conv pullback}
In this definition, and in what follows, `pullback' means `pullback in the $(\infty,1)$-category of spaces' or `homotopy pullback.'
For a reader working with simplicial sets rather than simplicial spaces, these will just be ordinary pullbacks.
\end{convention}

A simplicial set which is a decomposition space will be called a \emph{discrete decomposition space}.

\begin{diversion}[Variations]
What happens if one replaces the active-inert factorization system on $\Delta$ in \cref{def decomp space} with a factorization system on some other indexing category?
This question is broad and vague, but special cases have been considered in \cite[Remark 3.15]{Berger:MCO}, \cite[p.\ 635]{Burkin:TACOSC}, \cite{Contreras_et_al:FCPBS}, and \cite{Godicke:IC2SS}.\footnote{It does not seem to be the case that cyclic 2-Segal objects (see \cite{DyckerhoffKapranov:TSTC,BergnerStern:CSS}) are of this form, owing to the lack of a compatible orthogonal factorization systems on Connes' cyclic category $\Lambda$ \cite{Connes:CCFE}.
Indeed, if the right class of a factorization system on $\Lambda$ contains $\delint$, then it contains the positive part of the generalized Reedy factorization system on $\Lambda$ (see \cite{BergerMoerdijk:OENRC}), while if the left class of a factorization system contains $\delact$, then it contains all of $\Lambda$.}
\end{diversion}

If $C$ is a category, then the nerve of $C$ is a (discrete) decomposition space.
We return to this example in \cref{sec segal spaces}.
A rich source of combinatorial examples are the free decomposition spaces, which we discuss in \cref{sec free}.
We will give additional examples in a moment, but first we give an equivalent characterization (to be discussed in the next section).
The following appears in \cite{Feller_et_al:E2SSU}.

\begin{definition}\label{def 2-segal}
A simplicial space $X$ is said to be \emph{upper 2-Segal} (resp.\ \emph{lower 2-Segal}) if the square below left (resp.\ below right) is a pullback for all $0 < i < n$.
\begin{equation} \label{eq upper and lower 2-Segal} \begin{tikzcd}
X_{n+1} \rar{d_{i+1}} \dar[swap]{d_\bot} \ar[dr, phantom, "\lrcorner" very near start] & X_n \dar{d_\bot} 
& & 
X_{n+1} \rar{d_i} \dar[swap]{d_\top} \ar[dr, phantom, "\lrcorner" very near start] & X_n \dar{d_\top}
\\
X_n \rar[swap]{d_i} & X_{n-1}
& & 
X_n \rar[swap]{d_i} & X_{n-1}
\end{tikzcd} 
\tag{$\heartsuit$}
\end{equation}
A \emph{2-Segal space} is a simplicial space which is both upper and lower 2-Segal.
\end{definition}

As a simplicial space $X$ is lower 2-Segal if and only if $X^\op$ is upper 2-Segal, many results below address only the upper 2-Segal situation, with the understanding that there is always a dual version.
This definition of 2-Segal space isn't exactly what appeared in \cite{Stern:BIRS}, but will turn out to be equivalent.
We leave this to the reader once we've introduced the relevant tools --
see \cref{exc 2-Segal defs}.
Also note that upper and lower 2-Segal spaces are independently of interest: Garner--Kock--Weber \cite{GarnerKockWeber:OCD} showed that unary operadic categories are the same thing as upper 2-Segal sets (see also \cite{Hackney:OC2SS}), while Carawan \cite{Carawan:2SMACWC} gives examples of lower 2-Segal objects coming from the $S_\bullet$-construction applied to a cofibration category.

\begin{example}[Simplicial groupoid of rooted forests]\label{ex forests}
A fundamental example of \cite{GKT1} concerns the simplicial groupoid $H$ which underlies the Butcher--Connes--Kreimer Hopf algebra \cite{ConnesKreimer:HARNG}.
For $n \geq 0$, $H_{n+1}$ is the groupoid of forests together with $n$ (compatible) admissible cuts.
For example, the following forest is an object of $H_1$,
\begin{center}
\includegraphics[scale=0.2]{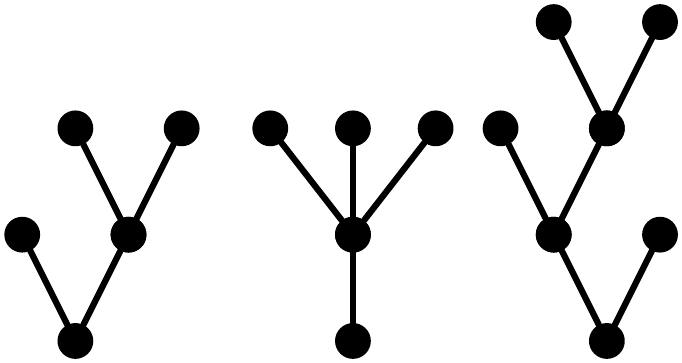}
\end{center}
which is in turn the image under $d_1$ of the following forest with a cut in $H_2$.
\begin{center}
\includegraphics[scale=0.2]{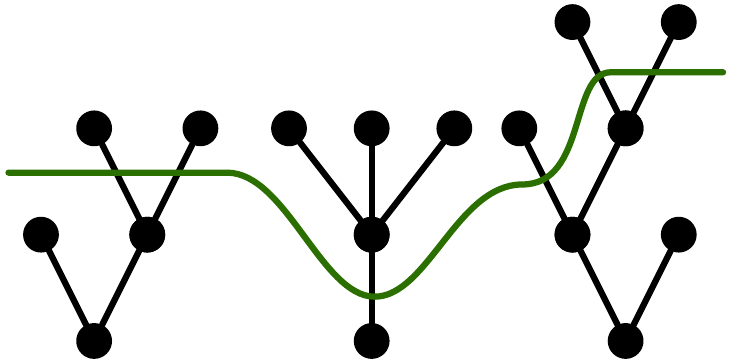}
\end{center}
The outer faces clip above and below the cut, producing 
\begin{center}
\includegraphics[scale=0.2]{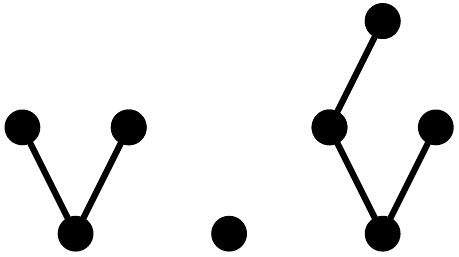} \qquad and \qquad \includegraphics[scale=0.2]{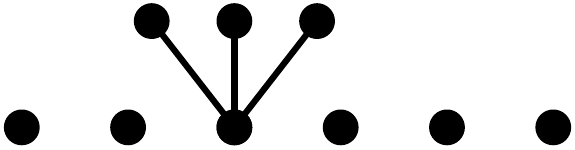} 
\end{center}
This is a 2-Segal space (but not a Segal space in the sense of \cref{def Segal} below, as there are many 2-simplices having the above forests as their outer faces).
As an illustration, the tree with two cuts in the upper left of the following square (which is an element of $H_3$) is uniquely determined by the remainder of the diagram, giving an indication of why the right square of \eqref{eq upper and lower 2-Segal} is a pullback in the case $n=2$.
\begin{center}
\begin{tikzpicture}[commutative diagrams/every diagram]
\node[inner sep = 4pt, outer sep=2pt] (nw) at (0,0)
  {\includegraphics[scale=0.2]{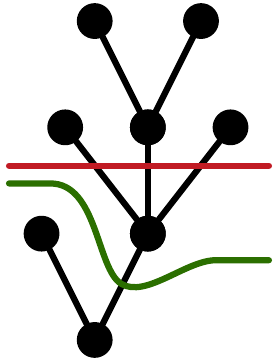}};
\node[inner sep = 4pt, outer sep=2pt] (ne) at (3,0)
  {\includegraphics[scale=0.2]{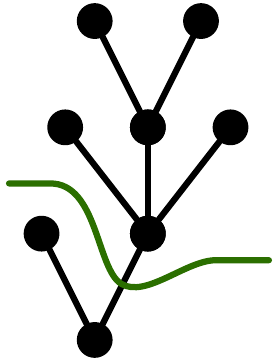}};
\node[inner sep = 4pt, outer sep=2pt] (sw) at (0,-2.5)
  {\includegraphics[scale=0.2]{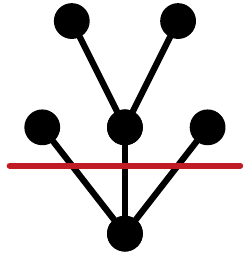}};
\node[inner sep = 4pt, outer sep=2pt] (se) at (3,-2.5)
  {\includegraphics[scale=0.2]{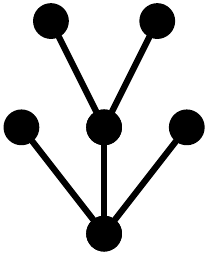}};

\path[commutative diagrams/.cd, every arrow, every label]
  (nw) edge node {$d_1$} (ne)
  (nw) edge node[swap] {$d_3$} (sw)
  (ne) edge node {$d_2$} (se)
  (sw) edge node[swap] {$d_1$} (se);
\end{tikzpicture}
\end{center}
\end{example}

\begin{example}[Partial monoids]\label{ex partial monoid} In \cite[\S2]{Segal:CSILS}, a \emph{partial monoid} is defined to be a set $M$ together with an identity element $1\in M$ and a partially defined multiplication function $M \times M \nrightarrow M$.
These are unital in the sense that $1\cdot x = x = x \cdot 1$ are defined and equal for all $x\in M$, and are associative in the sense that if one of $x \cdot (y \cdot z)$ or $(x \cdot y) \cdot z$ is defined, then so is the other, and they are equal.
This forms a simplicial set where the $n$-simplices are the length $n$ words $(x_1, \dots, x_n)$ such that the multiplication of these $n$ elements is defined for some parenthesization (and hence for all possible parenthesizations). 
It was shown in \cite[Example 2.1]{BOORS:2SSWC} that this simplicial set is always 2-Segal.
\end{example}

\begin{exercise}
Show that every partial monoid is 2-Segal using only \cref{def 2-segal}.
\end{exercise}

This can be extended to a kind of partial category. 
Let $\catpar$ denote the category whose objects are sets and whose morphisms are partial functions, considered as a monoidal category using the cartesian product of sets.
\phnote{Consider removing partial categories.}

\begin{exercise}
Let $C$ be a small category enriched in $\catpar$.
Show that if the set $C(x,x)$ is inhabited for every object $x$ of $C$, then $C$ determines a simplicial set which is 2-Segal.
\end{exercise}

\begin{remark}
Each partial monoid may be regarded as a monoid with a zero element, by adjoining a new absorbing element $0$ to $M$ and declaring that all undefined multiplications take $0$ as their value.
Likewise, for any monoid with a zero element different from $1$, we obtain a partial monoid by discarding $0$.
In a similar manner, the $\catpar$-enriched categories from the preceding exercise correspond to categories with (nonidentity) zero morphisms \cite[\S1.7]{Pareigis:CF}.
\end{remark}

The examples of partial monoids and $\catpar$-enriched categories suggest a useful perspective: simplicial sets which are 2-Segal (or equivalently, discrete decomposition spaces) can be understood as encoding multivalued ``associative'' composition laws.
See \cite[\S3.3]{DyckerhoffKapranov:HSS} and \cite{Stern:BIRS} for precise statements.
The preceding examples cover the case of composition having at most one value, and the single-valued composition laws correspond to the Segal objects (\cref{sec segal spaces}).

We mention, in passing, a class of maps which plays an important role in the theory of decomposition spaces.
These maps will not make a significant appearance in this paper, but will reappear elsewhere in this volume \cite{CooperYoung:BIRS,GKT:DSC}.
In particular, \cite{CooperYoung:BIRS} explains why a culf map between decomposition spaces induces a coalgebra homomorphism between the associated incidence coalgebras (originally from \cite[Lemma 8.2]{GKT1}).

\begin{definition}[Culf maps]\label{def culf}
A map $f \colon X \to Y$ between simplicial spaces is \emph{culf} if it is cartesian on active maps.
Concretely, this means that the following squares are pullbacks, for $0 < i < n$ and $0\leq j \leq n$.
\[ \begin{tikzcd}
X_n \rar{d_i} \dar[swap]{f} \ar[dr, phantom, "\lrcorner" very near start]  & X_{n-1} \dar{f} 
& &
X_n \rar{s_j} \dar[swap]{f} \ar[dr, phantom, "\lrcorner" very near start]  & X_{n+1} \dar{f} 
\\
Y_n  \rar[swap]{d_i} & Y_{n-1} 
& &
Y_n  \rar[swap]{s_j} & Y_{n+1}
\end{tikzcd}
\]
\end{definition}

\section{2-Segal spaces are decomposition spaces}\label{sec 2-Segl decomp}

In this section we discuss the following fundamental result; as mentioned in the introduction, it was known early on that decomposition spaces were the same thing as unital 2-Segal spaces, and later on it was discovered that the unitality condition is automatic \cite{Feller_et_al:E2SSU}.

\begin{theorem}\label{thm unitality} 
A simplicial space $X$ is a decomposition space if and only if it is both upper and lower 2-Segal.
\end{theorem}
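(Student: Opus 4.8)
The implication I focus on is that every decomposition space is a 2-Segal space (the easy direction), and then the harder converse. For the forward direction, I would observe that each square in \eqref{eq upper and lower 2-Segal} arises from an active-inert pushout in $\Delta$: the map $d_{i+1}\colon [n]\rint[n+1]$ paired against $\delta^i\colon [n]\ract[n+1]$... wait, I need to be careful about which maps are active and which are inert. The square on the left of \eqref{eq upper and lower 2-Segal} has $d_\bot$ on the vertical (induced by the inert map $\delta^0$) and $d_i$ for $0<i<n$ on the horizontal (induced by an inner coface map, which is active). So I would exhibit this square as the image under $X$ of the active-inert pushout of $\delta^0\colon[n]\rint[n+1]$ against the inner coface $\delta^i\colon[n-1]\ract[n]$, using \cref{lem ai pushout}; the definition of decomposition space then immediately gives that it is a pullback. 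The lower 2-Segal case is dual via $X^\op$, or uses $\delta^{n+1}$ in place of $\delta^0$.

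**The converse.** For ``2-Segal implies decomposition space,'' the strategy is to reduce an arbitrary active-inert pushout square to a composite of the basic squares \eqref{eq upper and lower 2-Segal}, then invoke the pasting law for pullbacks (\emph{Pasting Law for Pullbacks}) to conclude. Concretely: given an active-inert pushout \eqref{diag ai pushout} with $\alpha\colon[n]\ract[m]$ and $\iota\colon[n]\rint[k]$, I would factor $\alpha$ into inner cofaces (and degeneracies) and $\iota$ into outer cofaces, and argue that the resulting large square in $X$ tiles into small squares each of which is either one of the basic 2-Segal squares, or a degeneracy square, or a square that is trivially a pullback because one pair of parallel edges is an identity. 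The pasting law then upgrades the tiling to the conclusion that the outer square is a pullback. This is exactly the style of argument flagged in the introduction (``reduce as much as possible to a calculus of pullback squares'').

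**The main obstacle.** The hard part is the bookkeeping in the reduction: active maps are generated by inner cofaces \emph{and} codegeneracies, so I also need to handle squares involving degeneracy operators $s_j$, and confirm that the relevant active-inert pushouts against $s_j$ produce squares which follow from \eqref{eq upper and lower 2-Segal} together with the simplicial identities — in other words, I need to know that the unitality/degeneracy condition is automatic, which is precisely the content of \cite{Feller_et_al:E2SSU} and the genuinely novel input beyond the classical ``unital 2-Segal $\Leftrightarrow$ decomposition space'' equivalence. I expect the cleanest route is: first prove that an upper-and-lower 2-Segal space automatically satisfies the unitality conditions (the squares witnessing that $s_0\colon X_1\to X_2$, etc., are pullbacks), perhaps by a clever application of the basic squares at small $n$ combined with \emph{Retract Stability} of pullbacks; and then run the tiling argument for the honestly active (coface-only) part separately. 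Alternatively, following \cite{Feller_et_al:E2SSU} directly, one deduces everything from the two families in \eqref{eq upper and lower 2-Segal} using only pasting and retract stability, which I would present as the core lemma before assembling the general active-inert square.
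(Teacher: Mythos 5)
Your proposal follows essentially the same route as the paper: the forward direction is the one-line observation that the squares in \eqref{eq upper and lower 2-Segal} are images of active-inert pushouts, and your converse strategy --- first establishing that the degeneracy (unitality) squares are automatic via the pasting law together with retract stability (exactly the content of \cref{lem higher degen} and \cref{lem bot degen}, with retract stability needed precisely for the lowest-dimensional case), then assembling a general active-inert square by factoring the active map into inner cofaces and codegeneracies and inducting on the inert direction with the pasting law --- is the paper's proof sketch for \cref{thm unitality}. The only quibble is a harmless indexing slip in the forward direction (the relevant pushout is of $\delta^i\colon[n-1]\ract[n]$ against $\delta^0\colon[n-1]\rint[n]$, a span with common domain $[n-1]$), which does not affect the argument.
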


It is immediate that every decomposition space is 2-Segal, since the squares in \eqref{eq upper and lower 2-Segal} of \cref{def 2-segal} are active-inert pullback squares.
The converse is more interesting.
If $X$ is a decomposition space, then the following basic squares (for all $0 \leq i \leq n$) come from active-inert pushout squares in $\Delta$, so must be pullbacks.
\[ \begin{tikzcd}
X_{n+1} \rar{s_{i+1}} \dar[swap]{d_\bot} \ar[dr, phantom, "\lrcorner" very near start]  & X_{n+2} \dar{d_\bot} 
& &
X_{n+1} \rar{s_i} \dar[swap]{d_\top} \ar[dr, phantom, "\lrcorner" very near start]  & X_{n+2} \dar{d_\top} 
\\
X_n  \rar[swap]{s_i} & X_{n+1} 
& &
X_n  \rar[swap]{s_i} & X_{n+1}
\end{tikzcd}
\]
However, the 2-Segal condition makes no mention of any squares involving degeneracies.
The main effort of the proof of \cref{thm unitality} is addressing this.
Most proofs in the rest of the paper rely on two basic tools.
Here is the first:

\begin{pastinglaw}
Suppose we have a diagram
\[ \begin{tikzcd}
\bullet \rar \dar & \bullet \rar \dar \ar[dr, phantom, "\lrcorner" very near start] & \bullet \dar \\
\bullet \rar & \bullet \rar & \bullet 
\end{tikzcd} \]
with right square a pullback.
Then the left square is a pullback if and only if the outer (composite) square is a pullback.
\end{pastinglaw}

For a 1-category $C$ (such as that of sets), this is traditionally an exercise, but see \cite[Proposition 2.5.9]{Borceux:HCA1} for a proof.
In an $(\infty,1)$-category $C$ (such as that of spaces), this is sometimes called the \emph{prism lemma}, since the diagram above should be specified by a functor $\Delta^2 \times \Delta^1 \to C$ (see \cite[Lemma 4.4.2.1]{Lurie:HTT} or \cite[\href{https://kerodon.net/tag/03FZ}{Tag 03FZ}]{kerodon}).

As an illustration of the utility of the pasting law, we establish the following characterization of the (upper) 2-Segal condition from \cite[Lemma 3.6]{GKT1}.

\begin{proposition}\label{prop fewer squares}
The following are equivalent for a simplicial space $X$.
\begin{enumerate}
\item $X$ is upper 2-Segal.\label{item fewer upper}
\item For each $n\geq 2$, the left square of \eqref{eq upper and lower 2-Segal} is a pullback for some $0 < i < n$.\label{item fewer some}
\end{enumerate}
\end{proposition}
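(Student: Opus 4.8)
The implication $(\ref{item fewer upper})\Rightarrow(\ref{item fewer some})$ is trivial, so the content is the converse. Assume $(\ref{item fewer some})$, and write $L^n_i$ for the left-hand square of \eqref{eq upper and lower 2-Segal}, defined for $n\geq 2$ and $0<i<n$. The plan is to prove by induction on $n\geq 2$ that every $L^n_i$ is a pullback. The base case $n=2$ is immediate, since $0<i<2$ forces $i=1$ and $L^2_1$ is a pullback by hypothesis.

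For the inductive step, fix $n\geq 3$ and assume every $L^m_i$ with $m<n$ is a pullback; in particular this applies to the ``top'' squares $L^m_{m-1}$ for $2\leq m\leq n-1$. The key observation is that, for \emph{each} $i$ with $0<i<n$, there is a horizontal pasting
\[
L^{n}_{i}\ \ast\ L^{n-1}_{n-2}\ \ast\ L^{n-2}_{n-3}\ \ast\ \cdots\ \ast\ L^{2}_{1},
\]
which composes because the right leg of each square (a copy of $d_\bot$) is the left leg of the next, and whose outer composite $T_n$ does \emph{not} depend on $i$: one checks that $T_n$ has corners $X_{n+1},X_2$ on top and $X_n,X_1$ on the bottom, that its vertical legs are the bottom face maps $d_\bot$, and that its horizontal legs are the maps $X_{n+1}\to X_2$ and $X_n\to X_1$ induced by the vertex inclusions $\{0,1,n+1\}\hookrightarrow[n+1]$ and $\{0,n\}\hookrightarrow[n]$ — none of which mentions $i$.

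Granting this, two applications of the pasting law finish the step. First, the composite $L^{n-1}_{n-2}\ast\cdots\ast L^{2}_{1}$ of the ``lower'' squares is a pullback, being an iterated composite of squares that are pullbacks by the inductive hypothesis (applying the pasting law repeatedly). Choosing via $(\ref{item fewer some})$ an index $i_0$ with $L^{n}_{i_0}$ a pullback and pasting it to the left of this lower composite, the pasting law (with the lower composite as the right-hand pullback) shows that $T_n$ is a pullback. Finally, for an arbitrary $j$ with $0<j<n$, the staircase that starts from $L^{n}_{j}$ has the \emph{same} lower composite on the right (still a pullback) and the \emph{same} outer square $T_n$ (now known to be a pullback), so the pasting law forces $L^{n}_{j}$ to be a pullback. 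This completes the induction.

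The only step that needs genuine verification is that $T_n$ is independent of $i$, i.e.\ the identification of its two horizontal legs. This is a routine bookkeeping computation with the identities $d_pd_q=d_{q-1}d_p$ for $p<q$: one shows that $d_2d_3\cdots d_{n-1}\,d_{i+1}\colon X_{n+1}\to X_2$ is the face map onto $\{0,1,n+1\}$ and that $d_1d_2\cdots d_{n-2}\,d_i\colon X_n\to X_1$ is the face map onto $\{0,n\}$, for every admissible $i$. Informally: each face operator deletes one vertex, and the deletions prescribed by the ``top'' squares wipe out everything except the first two vertices and the last, regardless of which vertex $d_{i+1}$ removed first. I expect this combinatorial check to be the only fiddly part; everything else is the pasting law, exactly as the surrounding discussion advertises.
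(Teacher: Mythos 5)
Your proof is correct and is essentially the paper's own argument: your outer square $T_n$ (with horizontal legs induced by $\{0,1,n+1\}\hookrightarrow[n+1]$ and $\{0,n\}\hookrightarrow[n]$, i.e.\ $d_2^{n-1}$ and $d_1^{n-1}$) is exactly the auxiliary pullback condition the paper interposes as a third equivalent statement, and both proofs run the same staircase through the pasting law twice -- once with the chosen index to establish the long rectangle, once to transfer the pullback property to an arbitrary index. The only difference is packaging (you induct on the full statement and rebuild the long rectangle from the top squares $L^m_{m-1}$, rather than inducting on the rectangle condition itself), and your combinatorial identification of the composite face maps is correct.
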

\begin{proof}
It is immediate that \eqref{item fewer upper} implies \eqref{item fewer some}.
To prove the converse, it is helpful to add a third equivalent condition:
\begin{enumerate}[start=3]
\item For each $n\geq 2$, the square below is a pullback.\label{item fewer composite}
\[ \begin{tikzcd}
X_{n+1} \rar{d_2^{n-1}} \dar[swap]{d_\bot} & X_2 \dar{d_\bot} \\
X_n \rar[swap]{d_1^{n-1}} & X_1
\end{tikzcd} \]
\end{enumerate}
The following diagram is used for the remaining implications.
\[ \begin{tikzcd}
X_{n+2} \rar{d_{i+1}} \dar[swap]{d_\bot} &
X_{n+1} \rar{d_2^{n-1}} \dar[swap]{d_\bot} \ar[dr, phantom, "\lrcorner" very near start] & X_2 \dar{d_\bot}
\\
X_{n+1} \rar[swap]{d_i} & 
X_n \rar[swap]{d_1^{n-1}} & X_1
\end{tikzcd} \]
For \eqref{item fewer some} implies \eqref{item fewer composite}, we assume inductively that the right square above is a pullback, and that for some $0 < i < n+1$ the left square is a pullback. 
This implies that the outer rectangle is also a pullback.
But $d_2^{n-1}d_{i+1} = d_2^n$ and $d_1^{n-1}d_i = d_1^n$, so this outer rectangle is the case of \eqref{item fewer composite} in the next level.
For \eqref{item fewer composite} implies \eqref{item fewer upper}, let $0 < i < n+1$ be arbitrary. 
Then the outer rectangle and the right square in the diagram are pullbacks, hence so is the left square.
Thus $X$ is upper 2-Segal.
\end{proof}


\begin{lemma}\label{lem higher degen}
Suppose $X$ is an upper 2-Segal space.
If $n > 0$ and $0\leq i \leq n$, then the square
\[ \begin{tikzcd}
X_{n+1} \rar{s_{i+1}} \dar[swap]{d_\bot} \ar[dr, phantom, "\lrcorner" very near start]  & X_{n+2} \dar{d_\bot} \\
X_n  \rar[swap]{s_i} & X_{n+1}
\end{tikzcd} 
\]
is a pullback.
\end{lemma}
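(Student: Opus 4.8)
The plan is to reduce this to the Pasting Law by the standard trick of absorbing a degeneracy into a face-map square: because $d_as_b$ can be arranged to be the identity, a square with a degeneracy along the top is realizable as one factor of a pasting rectangle whose other factor is a $2$-Segal pullback and whose total rectangle is trivially a pullback.

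In detail, I would fix $n>0$ and $0\leq i\leq n$ and choose an inner index $j$ with $0<j<n+1$, taking $j=i$ when $i>0$ and $j=1$ when $i=0$ (if $i>0$ then $n\geq i\geq 1$ automatically, while for $i=0$ the hypothesis $n>0$ is exactly what makes $1<n+1$). The purpose of this choice is that the simplicial identities yield $d_{j+1}s_{i+1}=\id_{X_{n+1}}$ and $d_{j}s_{i}=\id_{X_{n}}$ -- instances of $d_as_a=\id$ and $d_as_{a-1}=\id$. I would then form the diagram
\[ \begin{tikzcd}
X_{n+1} \rar{s_{i+1}} \dar[swap]{d_\bot} & X_{n+2} \rar{d_{j+1}} \dar[swap]{d_\bot} \ar[dr, phantom, "\lrcorner" very near start] & X_{n+1} \dar{d_\bot} \\
X_n \rar[swap]{s_i} & X_{n+1} \rar[swap]{d_j} & X_n
\end{tikzcd} \]
whose two squares commute by the simplicial identities relating $d_\bot=d_0$ to the remaining faces and degeneracies.

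The right-hand square is precisely the upper 2-Segal square of \eqref{eq upper and lower 2-Segal} with parameter $n+1$ and inner index $j$ (admissible since $0<j<n+1$), hence a pullback by hypothesis. The outer rectangle has top edge $d_{j+1}s_{i+1}=\id_{X_{n+1}}$, bottom edge $d_js_i=\id_{X_n}$, and both vertical edges equal to $d_\bot$, so its horizontal edges are identities and it is trivially a pullback. By the Pasting Law the left-hand square is then a pullback, and this is exactly the square in the statement.

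I do not anticipate a genuine obstacle here: once the pasting diagram is assembled, the argument is bookkeeping with the simplicial identities. The one delicate point is the boundary case $i=0$, where $j=i$ is unavailable so one must take $j=1$; this is precisely what forces the hypothesis $n>0$ (for $n=0$ there is no admissible $j$, and that case -- the true unitality phenomenon -- lies outside the scope of this lemma). Apart from this case split the proof is uniform in $i$.
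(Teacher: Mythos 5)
Your proof is correct and is essentially the paper's own argument: the paper uses the identical pasting rectangle with $j\in\{i,i+1\}$ chosen so that $0<j<n+1$, the right square being the upper 2-Segal pullback at level $n+1$ and the outer rectangle being trivially a pullback since its horizontal edges are identities. Your explicit choice of $j$ and the handling of the $i=0$ case via $n>0$ match the paper's reasoning exactly.
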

\begin{proof}
This is an application of the pasting law.
We can choose $j \in \{i,i+1 \}$ with $0 < j < n+1$, and then we have the following diagram.
\[
\begin{tikzcd}
X_{n+1} \ar[rr,bend left, "\id"'] \rar{s_{i+1}} \dar[swap]{d_\bot} & X_{n+2} \dar[swap]{d_\bot} \ar[dr, phantom, "\lrcorner" very near start] \rar{d_{j+1}} & X_{n+1} \dar{d_\bot} \\
X_n  \rar[swap]{s_i} \ar[rr, bend right, "\id"] & X_{n+1} \rar[swap]{d_j} & X_n
\end{tikzcd}
\]
The right square is a pullback since $X$ is upper 2-Segal, and the outer square is a pullback.
Hence the left square is a pullback.
\end{proof}

Unfortunately, the proof of the lemma does not cover the $n=0$ case.
For that, we need our second basic tool.

\begin{retrstab}
Retracts of pullbacks are pullbacks.
\end{retrstab}
Being a pullback is a property of an object of $\fun(\Delta^1 \times \Delta^1, C)$, the category of commutative squares in $C$, and this property is stable under retracts.\phnote{May be confusing if not working in $\infty$-categories.}
See \cite[Appendix A]{Feller_et_al:E2SSU} for homotopy pullbacks in model categories; for quasi-categories use the duals of \cite[Lemma 5.1.6.3]{Lurie:HTT} or \cite[\href{https://kerodon.net/tag/05E6}{Tag 05E6}]{kerodon}.
The following is proved in \cite{Feller_et_al:E2SSU}.

\begin{lemma}\label{lem bot degen}
If $X$ is an upper 2-Segal space, then the square
\[ \begin{tikzcd}
X_1 \rar{s_1} \dar[swap]{d_0} \ar[dr, phantom, "\lrcorner" very near start]  & X_2 \dar{d_0} \\
X_0  \rar[swap]{s_0} & X_1
\end{tikzcd} 
\]
is a pullback.
\end{lemma}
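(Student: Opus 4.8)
The plan is to exhibit the square in the statement as a \emph{retract} of a pullback square we have already produced, and then invoke Retract Stability. The ambient pullback will be the smallest nontrivial instance of \cref{lem higher degen}: taking $n = 1$ and $i = 1$ there, the square
\[ \begin{tikzcd}
X_2 \rar{s_2} \dar[swap]{d_0} \ar[dr, phantom, "\lrcorner" very near start] & X_3 \dar{d_0} \\
X_1 \rar[swap]{s_1} & X_2
\end{tikzcd} \]
is a pullback for any upper 2-Segal $X$. Morally, this is a ``degree-shifted copy'' of the target square, the shift being witnessed by degeneracies going up and face maps coming back down.

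Concretely, I would define a map of commutative squares $\iota$ (an arrow in $\fun(\Delta^1 \times \Delta^1, C)$) from the square in the statement into the square above, with components on the top-left, top-right, bottom-left, and bottom-right corners given respectively by
\[
s_1\colon X_1 \to X_2,\qquad s_1\colon X_2 \to X_3,\qquad s_0\colon X_0 \to X_1,\qquad s_0\colon X_1 \to X_2,
\]
together with a map $r$ in the reverse direction with corresponding components
\[
d_1\colon X_2 \to X_1,\qquad d_1\colon X_3 \to X_2,\qquad d_0\colon X_1 \to X_0,\qquad d_0\colon X_2 \to X_1.
\]
Then it remains to check, using only the simplicial identities, that (a) $\iota$ is a map of squares -- the four naturality squares reduce to $d_0 s_1 = s_0 d_0$, $s_2 s_1 = s_1 s_1$, and $s_1 s_0 = s_0 s_0$; (b) $r$ is a map of squares -- the four naturality squares reduce to $d_0 s_1 = s_0 d_0$, $d_1 s_2 = s_1 d_1$, and $d_0 d_1 = d_0 d_0$; and (c) $r \circ \iota$ is the identity map of the square, which amounts to $d_1 s_1 = \id$ and $d_0 s_0 = \id$ in the relevant degrees. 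Granting (a)--(c), the square in the statement is a retract of a pullback square and hence a pullback.

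The real content lies in steps (a) and (b): the eight components must be chosen so that every naturality square commutes on the nose, and I expect arranging this to be the main (if routine) obstacle. The one slightly delicate point is the top-left corner, where one must use $s_1$ (not $s_0$) in $\iota$ and $d_1$ (not $d_0$) in $r$; this is precisely what makes the naturality square over the left-hand, face-map edge -- i.e.\ $s_0 d_0 = d_0 s_1$ -- hold. As required by the statement, only the upper 2-Segal hypothesis enters, through \cref{lem higher degen}.
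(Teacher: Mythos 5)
Your proposal is correct and is essentially identical to the paper's proof: the paper also exhibits the statement's square as a retract of the $(n,i)=(1,1)$ instance of \cref{lem higher degen}, using exactly your section $(s_1,s_1,s_0,s_0)$ and retraction $(d_1,d_1,d_0,d_0)$, and the naturality and retraction identities you list are precisely the simplicial identities that make the cube commute. Nothing is missing; steps (a)--(c) verify exactly as you predict.
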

\begin{proof}
The indicated square is a retract of 
\[
\begin{tikzcd}
X_2 \rar{s_2} \dar[swap]{d_\bot} \ar[dr, phantom, "\lrcorner" very near start]  & X_3 \dar{d_\bot} \\
X_1  \rar[swap]{s_1} & X_2
\end{tikzcd} 
\]
which is known to be a pullback by \cref{lem higher degen}.
This is witnessed by the maps of squares
\[
\setlength\arraycolsep{1pt}
S = \,
\begin{matrix}
s_1 & \vline & s_1 \\
\hline
s_0 & \vline & s_0
\end{matrix}
\qquad
\text{and}
\qquad
D = 
\, \begin{matrix}
d_1 & \vline & d_1 \\
\hline
d_0 & \vline & d_0
\end{matrix}
\]
with $DS = \id$.
More explicitly, maps between squares are cubes, and the diagram
  \[
  \begin{tikzcd}[sep={30pt,between origins}]
      {X_1} \ar[rrr, "s_1"] \ar[dd, "d_0"] \ar[dr, "s_1"] & & & 
      {X_2} \ar[rrr, "d_1"] \ar[dd, "d_0"' very near start] \ar[dr, "s_2"] & & & 
      {X_1} \ar[dd, "d_0"' very near start] \ar[dr, "s_1"] & \\ &
      {X_2} \ar[rrr, pos=0.4, "s_1", crossing over]   & & & 
      {X_3} \ar[rrr, pos=0.4, "d_1", crossing over]   & & & 
      {X_2} \ar[dd,"d_0"] \\ 
      {X_0} \ar[rrr, pos=0.64, "s_0"] \ar[dr, "s_0"'] & & & 
      {X_1} \ar[rrr, pos=0.64, "d_0"] \ar[dr, "s_1"'] & & & 
      {X_0} \ar[dr, "s_0"] & \\ &
      {X_1} \ar[rrr, "s_0"] \ar[from=uu, crossing over, "d_0" very near start] & & &  
      {X_2} \ar[rrr, "d_0"] \ar[from=uu, crossing over, "d_0" very near start] & & &  
      {X_1}
  \end{tikzcd}
  \]
is commutative.
\end{proof}

Dually, if $X$ is a lower 2-Segal space then for $n\geq 0$ and $0\leq i \leq n$, the square 
\[
\begin{tikzcd}
X_{n+1} \rar{s_i} \dar[swap]{d_\top} \ar[dr, phantom, "\lrcorner" very near start]  & X_{n+2} \dar{d_\top} \\
X_n  \rar[swap]{s_i} & X_{n+1}
\end{tikzcd}
\]
is a pullback: apply \cref{lem higher degen} and \cref{lem bot degen} to $X^\op$.
To prove \cref{thm unitality}, one can now follow \cite[3.4]{GKT1}, or reason about the situation directly as follows:

\begin{proof}[Proof sketch for \cref{thm unitality}]
First consider active-inert pushout squares
\[ \begin{tikzcd}
{[n]} \rar[-act, "\alpha"] \dar[tail, "\delta^k"'] \ar[dr, phantom, "\ulcorner" very near end] & {[m]} \dar[tail, "\theta"] \\
{[n+1]} \rar[-act, "\phi"'] & {[m+1]}
\end{tikzcd} \]
where $k$ is either $0$ or $n+1$ (implying $\theta = \delta^\ell$ for $\ell$ either $0$ or $m+1$).
By factoring $\alpha$ into degeneracies and inner faces, we see that 
\begin{equation}\label{diag codim one inert}
\begin{tikzcd}
X_{m+1} \rar{\phi^*} \dar[swap]{d_\ell} 
\ar[dr, phantom, "\lrcorner" very near start]
& X_{n+1} \dar{d_k} 
\\
X_m \rar[swap]{\alpha^*} & X_n
\end{tikzcd}
\end{equation}
is a pullback using the pasting law and our known pullback squares.
Then, for an arbitrary active-inert pushout square as in \eqref{diag ai pushout} of \cref{lem ai pushout}, use induction on $k-n$ along with the squares \eqref{diag codim one inert} and the pasting law. 
\end{proof}

\begin{exercise}\label{exc 2-Segal defs}
Recall that one characterization of 2-Segal spaces given in \cite{Stern:BIRS} is that $X$ sends the pushout diagram of finite ordered sets
\begin{equation}
\label{diag ij}
\begin{tikzcd}
\{ i,j\} \rar[-act] \dar[tail] & \{i,i+1, \dots, j \} \dar[tail] \\
\{0,1,\dots, i,j,j+1, \dots, n\} \rar[-act] & {[n]}
\end{tikzcd} \end{equation}
to a pullback for each $0 \leq i < j \leq n$.
Prove this is equivalent to the definition of 2-Segal space from \cref{def 2-segal}.
\end{exercise}

The squares \eqref{diag ij} correspond to those squares appearing in \eqref{diag ai pushout} of \cref{lem ai pushout} where $n=1$ and $\alpha$ is injective.
As in \cite[Proposition 2.3.2]{DyckerhoffKapranov:HSS}, one only needs to require these squares be sent to pullbacks when $i=0$ or $j=n$.

\section{Segal spaces are decomposition spaces}\label{sec segal spaces}

In \cite{Stern:BIRS}, we saw that the nerve of a category is a 2-Segal set (under the characterization from \cref{exc 2-Segal defs}).
This is true more generally: Segal spaces \cite{Rezk:MHTHT,Segal:CCT} are always decomposition spaces.
In this brief section we use the pasting law to establish this fact, and to characterize the Segal spaces among decomposition spaces.

\begin{definition}\label{def Segal}
A simplicial space $X$ is a \emph{Segal space} if the following equivalent conditions hold:
\begin{enumerate}
\item \label{item pullback Segal} For each $n\geq 1$, the following square is a pullback. 
\[ \begin{tikzcd}
X_{n+1} \ar[dr, phantom, "\lrcorner" very near start] \rar{d_\bot} \dar[swap]{d_\top} & X_n \dar{d_\top} \\
X_n \rar[swap]{d_\bot} & X_{n-1}
\end{tikzcd} \]
\item For each $n\geq 2$, the map
\[
\begin{tikzcd}[column sep=2.2cm]
X_n \rar["(d_\top^{n-i} d_\bot^{i-1})_{i=1}^n" swap, "\simeq"] & X_1 \times_{X_0} X_1 \times_{X_0} \dots  \times_{X_0} X_1
\end{tikzcd} 
\]
is an equivalence. \label{item usual Segal}
\end{enumerate}
\end{definition}

\begin{exercise}
Prove that \eqref{item pullback Segal} and \eqref{item usual Segal} in \cref{def Segal} are equivalent. (For a hint or solution, see \cite[Lemma 2.10]{GKT1}.)
\end{exercise}

\begin{proposition}\label{segal implies decomposition}
Every Segal space is a decomposition space.
\end{proposition}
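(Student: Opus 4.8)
The plan is to reduce, via \cref{thm unitality}, to showing that a Segal space $X$ is both upper and lower 2-Segal. The pullback square defining a Segal space in condition \eqref{item pullback Segal} of \cref{def Segal} is carried to its horizontal mirror image by the involution $(-)^\op$, so $X^\op$ is again a Segal space; since $X$ is lower 2-Segal exactly when $X^\op$ is upper 2-Segal, it is enough to prove that every Segal space is upper 2-Segal. By \cref{prop fewer squares} I only need one pullback per level, and I would arrange this through condition \eqref{item fewer composite}: that for every $n \geq 2$ the square
\[ \begin{tikzcd}
X_{n+1} \rar{d_2^{n-1}} \dar[swap]{d_\bot} & X_2 \dar{d_\bot} \\
X_n \rar[swap]{d_1^{n-1}} & X_1
\end{tikzcd} \]
is a pullback.

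The only tool is the pasting law, fed the basic squares of condition \eqref{item pullback Segal} in \cref{def Segal}. The first step is to record the \emph{binary Segal property}: for all $a,b \geq 1$ the square
\[ \begin{tikzcd}
X_{a+b} \rar{d_\bot^b} \dar[swap]{d_\top^a} \ar[dr, phantom, "\lrcorner" very near start] & X_a \dar{d_\top^a} \\
X_b \rar[swap]{d_\bot^b} & X_0
\end{tikzcd} \]
is a pullback; equivalently, $X$ carries the cover of $[a+b]$ by its initial segment $\{0,\dots,b\}$ and final segment $\{b,\dots,a+b\}$ to a pullback. This is obtained by splitting the iterated fiber product of condition \eqref{item usual Segal} of \cref{def Segal} after its $b$-th factor (equivalently, by induction on $a+b$, peeling off and pasting one basic Segal square at a time); the case $a=b=1$ is already a basic Segal square.

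Now fix $n \geq 2$ and write $X_S$ for the value of $X$ on the subsimplex of $[n+1]$ spanned by $S$, so $X_{\{0,1\}} \simeq X_1$, $X_{\{1\}} \simeq X_0$, $X_{\{0,1,n+1\}} \simeq X_2$, $X_{\{1,n+1\}} \simeq X_1$, and $X_{\{1,\dots,n+1\}} \simeq X_n$. Applying the binary Segal property to the cover of $[n+1]$ by $\{0,1\}$ and $\{1,2,\dots,n+1\}$ gives $X_{n+1} \simeq X_{\{0,1\}} \times_{X_{\{1\}}} X_{\{1,\dots,n+1\}}$, and to the cover of the $2$-simplex $\{0,1,n+1\}$ by $\{0,1\}$ and $\{1,n+1\}$ gives $X_{\{0,1,n+1\}} \simeq X_{\{0,1\}} \times_{X_{\{1\}}} X_{\{1,n+1\}}$. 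Substituting the second into the first and cancelling the now-repeated factor $X_{\{1,n+1\}}$ --- a pasting-law manipulation, using that $X_{\{1,\dots,n+1\}} \to X_{\{1,n+1\}}$ is the long-edge map --- yields
\[ X_{\{0,1,n+1\}} \times_{X_{\{1,n+1\}}} X_{\{1,\dots,n+1\}} \;\simeq\; X_{\{0,1\}} \times_{X_{\{1\}}} X_{\{1,\dots,n+1\}} \;\simeq\; X_{n+1}. \]
Running the iterated coface maps ($\delta^2$ composed with itself, likewise $\delta^1$, and $\delta^0$) through these identifications shows that the left-hand pullback is exactly the square displayed above: the two maps out of $X_{n+1}$ become $d_2^{n-1}$ and $d_\bot$, and the two maps into $X_1$ become $d_\bot$ and $d_1^{n-1}$. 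So that square is a pullback, and $X$ is upper 2-Segal by \cref{prop fewer squares}.

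I expect the binary Segal property to be routine; the real care is needed in the last paragraph --- phrasing ``cancel the repeated fiber-product factor'' as a clean consequence of the pasting law in the $(\infty,1)$-categorical setting, and matching the abstractly produced comparison map with the concrete square of condition \eqref{item fewer composite} by identifying the iterated face operators $d_2^{n-1}$, $d_1^{n-1}$, $d_\bot$ with the active maps realizing the two covers. A variant, conceptually a bit cleaner, is to establish the binary Segal property and then check directly that $X$ carries each square \eqref{diag ij} of \cref{exc 2-Segal defs} to a pullback by the same reassociation-and-cancellation of fiber products; this presupposes the equivalence of the formulation of \cref{exc 2-Segal defs} with \cref{def 2-segal}.
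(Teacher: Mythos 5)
Your argument is correct, and it takes a recognizably different route from the paper's, even though both reduce via duality and \cref{prop fewer squares} to the upper 2-Segal condition and both run on the pasting law. The paper verifies condition \eqref{item fewer some} directly (the left square of \eqref{eq upper and lower 2-Segal} with $i=n-1$): it forms two three-column rectangles built only from the basic Segal squares of \cref{def Segal}\eqref{item pullback Segal}, observes they share the same outer rectangle, and applies the pasting law once per level --- no auxiliary lemma and no extra induction. You instead first establish the ``binary Segal'' pullbacks for two-segment covers of $[a+b]$ meeting in a single vertex (itself a routine but genuinely separate induction, or an argument about splitting the iterated fiber product of \cref{def Segal}\eqref{item usual Segal}), and then verify the composite condition \eqref{item fewer composite} by one horizontal pasting: left square the \eqref{item fewer composite} square, right square the $X_2\simeq X_1\times_{X_0}X_1$ square, outer square the binary Segal square for the cover $\{0,1\}\cup\{1,\dots,n+1\}$; your identifications of the operators ($d_2^{n-1}$, $d_1^{n-1}$, $d_\bot$, and the long-edge map) all check out against the simplicial identities. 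Note that \eqref{item fewer composite} is only introduced inside the proof of \cref{prop fewer squares}, but its equivalence with upper 2-Segal is established there, so invoking it is legitimate. What the two approaches buy: yours makes explicit the ``long-edge/polygonal subdivision'' picture behind \cref{exc 2-Segal defs} and produces the binary Segal pullbacks as a reusable statement; the paper's is shorter, needs no auxiliary lemma, and stays entirely within the given adjacent-face Segal squares.
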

\begin{proof}
We will show that every Segal space is upper 2-Segal.
Since the opposite of a Segal space is a Segal space, and the opposite of an upper 2-Segal space is a lower 2-Segal space, this will suffice.
By \cref{prop fewer squares}, we need only show that the left square in the following diagram is a pullback.
The right square is a pullback since $X$ is Segal.
\[ \begin{tikzcd}
X_{n+1} \rar{d_n} \dar[swap]{d_\bot} &
X_n \rar{d_\top} \dar[swap]{d_\bot} \ar[dr, phantom, "\lrcorner" very near start] & X_{n-1} \dar{d_\bot}
\\
X_n \rar[swap]{d_{n-1}} & 
X_{n-1} \rar[swap]{d_\top} & X_{n-2}
\end{tikzcd} \]
The outer rectangle is equal to the outer rectangle of the following diagram.
\[ \begin{tikzcd}
X_{n+1} \rar{d_\top} \dar[swap]{d_\bot} \ar[dr, phantom, "\lrcorner" very near start] &
X_n \rar{d_\top} \dar[swap]{d_\bot} \ar[dr, phantom, "\lrcorner" very near start] & X_{n-1} \dar{d_\bot}
\\
X_n \rar[swap]{d_\top} & 
X_{n-1} \rar[swap]{d_\top} & X_{n-2}
\end{tikzcd} \]
Since both squares are pullbacks, so is the outer rectangle.
This implies that the left square in first diagram is a pullback, as desired.
\end{proof}

\begin{proposition}
A simplicial space $X$ is a Segal space if and only if it is a decomposition space and the following square is a pullback.
\[ \begin{tikzcd}
X_2 \rar{d_0} \dar[swap]{d_2} & X_1 \dar{d_1} \\
X_1 \rar[swap]{d_0} & X_0
\end{tikzcd} \]
\end{proposition}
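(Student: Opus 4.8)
The forward implication is immediate: if $X$ is a Segal space it is a decomposition space by \cref{segal implies decomposition}, and the square displayed in the statement is exactly the $n=1$ instance of the first condition of \cref{def Segal} (here $d_\bot = d_0$, while $d_\top$ is $d_1$ on $X_1$ and $d_2$ on $X_2$), hence a pullback. For the converse, suppose $X$ is a decomposition space and that displayed square is a pullback. The plan is to prove, by induction on $n \geq 1$, that the Segal square of \cref{def Segal} with corners $X_{n+1},X_n,X_n,X_{n-1}$ is a pullback; this is precisely what it means for $X$ to be a Segal space. The base case $n=1$ is the hypothesis.

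For the inductive step I would assume $n\geq 2$ and that the level-$(n-1)$ Segal square is a pullback. The key point is that the level-$n$ Segal square can be assembled, by two applications of the pasting law, out of the level-$(n-1)$ Segal square together with a single $2$-Segal square---and the latter comes for free, since a decomposition space sends every active-inert pushout, in particular every square in \eqref{eq upper and lower 2-Segal}, to a pullback. Concretely, I would reuse the two diagrams from the proof of \cref{segal implies decomposition}, which share a common outer rectangle: in the first, the left square is the left-hand square of \eqref{eq upper and lower 2-Segal} with parameters $n$ and $i=n-1$, and in the second, the left square is the level-$n$ Segal square; in both, the right square is the level-$(n-1)$ Segal square (in each case up to interchanging rows and columns, which is immaterial for being a pullback). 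In the first diagram the left square is a pullback because $X$ is a decomposition space and the right square is a pullback by the inductive hypothesis, so the pasting law makes the shared outer rectangle a pullback. Then in the second diagram the outer rectangle is a pullback and the right square is a pullback, so the pasting law forces the left square---the level-$n$ Segal square---to be a pullback, completing the induction.

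The subtle point is the choice of these particular diagrams. A more direct attempt to exhibit the level-$n$ Segal square as the left square of a pasting whose right square and outer rectangle are manifestly pullbacks tends to be circular: the moment one decomposes the bottom face $d_\bot$ out of $X_{n+1}$, commutativity forces the first factor to be either an upper $2$-Segal square or the level-$n$ Segal square itself. What breaks the loop is exactly the reorganisation of the common outer rectangle carried out in the proof of \cref{segal implies decomposition}, using the simplicial identities among the face maps. In writing this up I would double-check that the index $i=n-1$ always satisfies $0<i<n$ (which is why the induction must be anchored at the hypothesised case $n=1$ rather than at $n=0$) and verify the row/column interchanges identifying the relevant squares of the two diagrams with the Segal squares for $n$ and $n-1$.
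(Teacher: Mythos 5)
Your proof is correct and follows essentially the same route as the paper, which likewise inducts on the Segal squares anchored at the hypothesised $n=1$ case and, for the inductive step, reuses the two rectangles from the proof of \cref{segal implies decomposition}: the first (2-Segal square plus inductive Segal square) shows the shared outer rectangle is a pullback, and the second then yields the level-$n$ Segal square. Your identifications of the relevant squares (up to transposition) and the index check $0<n-1<n$ match the intended argument exactly.
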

\begin{proof}
The forward direction is immediate from \cref{segal implies decomposition}.
For the reverse direction, one shows via induction that the squares in \cref{def Segal} \eqref{item pullback Segal} are pullbacks; the case $n=1$ is an assumption.
For the inductive step, use the two rectangles from the proof of \cref{segal implies decomposition} in a different way, assuming that both squares in the first rectangle and the right square in the second rectangle are pullbacks, in order to deduce that the left square in the second rectangle is a pullback.
\end{proof}

\section{The path space criterion}\label{sec path space}

We now turn to the path space criterion, which is originally due to Dyckerhoff--Kapranov \cite[Theorem 6.3.2]{DyckerhoffKapranov:HSS}.
Our presentation separates the lower and upper 2-Segal conditions, following Poguntke \cite{Poguntke:HSSAKT}, who also gave higher-dimensional versions.
A version for decomposition spaces appeared as \cite[Theorem 4.10]{GKT1}.

\begin{theorem}\label{thm upper path}
A simplicial space $X$ is upper 2-Segal if and only if its upper d\'ecalage $\udec X$ is Segal.
\end{theorem}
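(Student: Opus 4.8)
The plan is to unwind the simplicial structure of the upper décalage, recognize the Segal squares of $\udec X$ as a one-parameter family among the upper $2$-Segal squares of $X$, and then let \cref{prop fewer squares} bridge the gap between ``one square per degree'' and ``all squares.''

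First I would recall the explicit description of $\udec X$: it is obtained by precomposing $X$ with the endofunctor of $\Delta$ that appends a new top element, $[n] \mapsto [n+1]$. Concretely $(\udec X)_n = X_{n+1}$, and the face maps $d_0, \dots, d_n$ and degeneracies $s_0, \dots, s_n$ of $\udec X$ in degree $n$ are literally the operators $d_0, \dots, d_n$ and $s_0, \dots, s_n$ of $X$ acting on $X_{n+1}$; the leftover top faces $d_\top = d_{n+1} \colon X_{n+1} \to X_n$ are exactly the ones not recorded by $\udec X$ (they assemble into the décalage map $\udec X \to X$, which plays no role here). In particular the bottom face of $\udec X$ is always $d_0$ of $X$, whereas the top face of $\udec X$ in degree $k$ is $d_k \colon X_{k+1} \to X_k$. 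Keeping this index bookkeeping straight is the only point of the argument requiring care.

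With this in place, for each $m \geq 1$ I would write down the Segal square of $Y = \udec X$ in degree $m$ from \cref{def Segal} \eqref{item pullback Segal}: its initial corner is $Y_{m+1} = X_{m+2}$, the two maps out of it are $d_\bot^Y = d_0$ and $d_\top^Y = d_{m+1}$, both into $Y_m = X_{m+1}$, and the square is completed by $d_\bot^Y = d_0$ and $d_\top^Y = d_m$ into $Y_{m-1} = X_m$. A direct comparison of the four maps (with $d_0 d_{m+1} = d_m d_0$ giving commutativity) shows this square is the transpose of the upper $2$-Segal square of $X$ -- the left-hand square of \eqref{eq upper and lower 2-Segal} -- taken at $n = m+1$ and $i = n-1 = m$, namely
\[
\begin{tikzcd}
X_{m+2} \rar{d_{m+1}} \dar[swap]{d_\bot} & X_{m+1} \dar{d_\bot} \\
X_{m+1} \rar[swap]{d_m} & X_m
\end{tikzcd}
\]
Since a commutative square is a pullback if and only if the square obtained by interchanging its two projections is, $\udec X$ is a Segal space if and only if, for every $n \geq 2$, the upper $2$-Segal square of $X$ in degree $n$ is a pullback for the single index $i = n-1$ -- which lies in the range $0 < i < n$ precisely because $n \geq 2$.

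This last statement is exactly clause \eqref{item fewer some} of \cref{prop fewer squares} (a pullback for \emph{some} $0 < i < n$ in each degree), which that proposition identifies with $X$ being upper $2$-Segal. Chaining the equivalences yields both implications of the theorem simultaneously. I do not expect a genuine obstacle here: once the face operators of the décalage are pinned down, the theorem is essentially a one-line consequence of \cref{prop fewer squares}, and the $(\infty,1)$-categorical case needs nothing extra since \cref{prop fewer squares} was already established at that level of generality.
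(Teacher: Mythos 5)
Your proposal is correct and follows essentially the same route as the paper: both identify the Segal squares of $\udec X$ (up to transposition and an index shift) with the upper $2$-Segal squares of $X$ at the single index $i = n-1$, and then invoke \cref{prop fewer squares} to pass between ``some $i$ in each degree'' and the full upper $2$-Segal condition. The index bookkeeping in your argument matches the paper's proof exactly, so there is nothing further to add.
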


The upper d\'ecalage $\udec X$ is given by precomposition of $X \colon \Delta^\op \to \spaces$ with the endofunctor $\Delta \to \Delta$ that sends $[n]$ to $[n]\star [0] \cong [n+1]$.\footnote{
Here $\star$ denotes the \emph{join} of categories \cite[\href{https://kerodon.net/tag/0160}{Tag 0160}]{kerodon}, where the set of objects of $C \star D$ is the disjoint union of the sets of objects of $C$ and $D$, the categories $C$ and $D$ are full subcategories of $C\star D$, and, for $c\in C$ and $d\in D$, there are no morphisms $d\to c$ and exactly one morphism $c\to d$.}
This endofunctor takes $f\colon [n] \to [m]$ to $g\colon [n+1] \to [m+1]$ satisfying $g(n+1) = m+1$ and $g(k) = f(k)$ for $0\leq k \leq n$.

More explicitly, we have $(\udec X)_n = X_{n+1}$ and the face and degeneracy operators are given for $0 \leq k \leq n$ by
\begin{align*} 
\big( d_k \colon (\udec X)_n \to (\udec X)_{n-1} \big) &= \big( d_k \colon X_{n+1} \to X_n \big)
\\
\big( s_k \colon (\udec X)_n \to (\udec X)_{n+1} \big) &= \big( s_k \colon X_{n+1} \to X_{n+2} \big).
\end{align*}
Notably, we have forgotten about the top face $d_\top = d_{n+1} \colon X_{n+1} \to X_n$ and top degeneracy $s_\top = s_n \colon X_n \to X_{n+1}$.
The upper d\'ecalage is also called the \emph{final path space} $P^\triangleright X$.

\begin{proof}[Proof of \cref{thm upper path}]
For $n\geq 2$, the following two squares are the same.
\[ \begin{tikzcd}
(\udec X)_n \rar{d_\top} \dar[swap]{d_\bot}  & (\udec X)_{n-1} \dar{d_\bot} 
&  
X_{n+1} \rar{d_n} \dar[swap]{d_\bot}  & X_n \dar{d_\bot}
\\
(\udec X)_{n-1} \rar[swap]{d_\top} & (\udec X)_{n-2}
&  
X_n \rar[swap]{d_{n-1}} & X_{n-1}
\end{tikzcd} \]
\cref{prop fewer squares} says that the right square is a pullback for all $n\geq 2$ if and only if $X$ is upper 2-Segal.
\end{proof}

Similarly, there is a lower d\'ecalage $\ldec X$ (or initial path space $P^\triangleleft X$) with $(\ldec X)_n = X_{n+1}$ obtained by forgetting the bottom face and degeneracy operators (and shifting indices by one).
It can be formally obtained using precomposition with the endofunctor $[n] \mapsto [0] \star [n]$ of $\Delta$, sending $f \colon [n] \to [m]$ to $g \colon [n+1] \to [m+1]$ satisfying $g(0) = 0$ and $g(k) = f(k-1) + 1$ for $1 \leq k \leq n+1$.
In other words, the face and degeneracy operators are as follows:
\begin{align*} 
\big( d_k \colon (\ldec X)_n \to (\ldec X)_{n-1} \big) &= \big( d_{k+1} \colon X_{n+1} \to X_n \big)
\\
\big( s_k \colon (\ldec X)_n \to (\ldec X)_{n+1} \big) &= \big( s_{k+1} \colon X_{n+1} \to X_{n+2} \big).
\end{align*}
One checks that $(\udec X)^\op = \ldec(X^\op)$, so that applying \cref{thm upper path} to $X^\op$ we obtain the following.

\begin{corollary}
A simplicial space $X$ is lower 2-Segal if and only if its lower d\'ecalage $\ldec X$ is Segal. \qed
\end{corollary}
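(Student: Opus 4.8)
The plan is to obtain this purely formally from \cref{thm upper path} via the duality $(-)^\op$, mirroring the way the statement of the corollary mirrors that of the theorem. Two elementary observations are needed, both already in play above. First, a simplicial space $Y$ is lower 2-Segal if and only if $Y^\op$ is upper 2-Segal: comparing the two squares in \eqref{eq upper and lower 2-Segal} under the relabelling $d_k^\op = d_{n-k}$ turns the left square at level $n$ into the right square at level $n$ and vice versa, so this is just the remark made right after \cref{def 2-segal}. Second, a simplicial space is Segal if and only if its opposite is Segal, since the defining square of \cref{def Segal} \eqref{item pullback Segal} is carried to itself (with $d_\bot$ and $d_\top$ interchanged) under $(-)^\op$ --- this is exactly the symmetry invoked in the proof of \cref{segal implies decomposition}.

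With these in hand, I would apply \cref{thm upper path} to $X^\op$ in place of $X$: it asserts that $X^\op$ is upper 2-Segal if and only if $\udec(X^\op)$ is Segal. By the first observation the left-hand side is equivalent to ``$X$ is lower 2-Segal.'' For the right-hand side, start from the identity $(\udec X)^\op = \ldec(X^\op)$ recorded just before the corollary, substitute $X^\op$ for $X$, and apply $(-)^\op$ to both sides (it is an involution on simplicial spaces) to obtain $\udec(X^\op) = (\ldec X)^\op$; hence the right-hand side is equivalent to ``$(\ldec X)^\op$ is Segal,'' which by the second observation is equivalent to ``$\ldec X$ is Segal.'' Concatenating the three equivalences gives the corollary.

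There is essentially no obstacle here beyond bookkeeping: the only point that really needs a check is the identity $(\udec X)^\op = \ldec(X^\op)$, and that is a routine comparison of face and degeneracy formulas. Both functors have $n$-th space $X_{n+1}$, and on it the $k$-th face of $(\udec X)^\op$ is $d_{n-k}\colon X_{n+1}\to X_n$ for $0\le k\le n$, which agrees with the operator $d_{k+1}^{X^\op} = d_{n-k}\colon X_{n+1}\to X_n$ coming from $\ldec(X^\op)$, and likewise for the degeneracies. Alternatively, one may simply observe that the duality endofunctor of $\Delta$ (the one defining $(-)^\op$) interchanges the endofunctors $[n]\mapsto[n]\star[0]$ and $[n]\mapsto[0]\star[n]$ that define $\udec$ and $\ldec$, which gives the identity at once.
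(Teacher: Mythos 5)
Your argument is exactly the paper's: the text preceding the corollary notes the identity $(\udec X)^\op = \ldec(X^\op)$ and obtains the statement by applying \cref{thm upper path} to $X^\op$, using that lower 2-Segal and Segal are preserved/detected by $(-)^\op$. Your proposal is correct and simply fills in these routine verifications (the face/degeneracy comparison and the self-duality of the Segal condition) in more detail.
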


Using \cref{thm unitality}, we conclude the original form of the path space criterion.

\begin{corollary}\label{cor path space}
A simplicial space $X$ is a decomposition space if and only if both $\udec X$ and $\ldec X$ are Segal. \qed
\end{corollary}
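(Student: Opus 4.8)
The plan is to chain together the three biconditionals already in hand. \cref{thm unitality} says $X$ is a decomposition space if and only if it is both upper and lower 2-Segal; \cref{thm upper path} identifies the upper 2-Segal condition with ``$\udec X$ is Segal''; and the dual corollary immediately preceding this one does the same for the lower side, identifying the lower 2-Segal condition with ``$\ldec X$ is Segal.'' Composing these equivalences yields exactly the stated characterization, so there is no separate computation to carry out.

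Concretely, I would run the argument as follows. Suppose first that $X$ is a decomposition space. By \cref{thm unitality} it is upper 2-Segal, so \cref{thm upper path} gives that $\udec X$ is Segal; likewise it is lower 2-Segal, so the preceding corollary gives that $\ldec X$ is Segal. Conversely, if both $\udec X$ and $\ldec X$ are Segal, then \cref{thm upper path} and its dual make $X$ both upper and lower 2-Segal, whence $X$ is a decomposition space by \cref{thm unitality}. Symbolically: $X$ decomposition $\iff$ $X$ upper and lower 2-Segal $\iff$ $\udec X$ and $\ldec X$ both Segal.

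Since the formal proof is this immediate, I expect the only substantive thing to say is where the content actually lives: the real work is in \cref{thm unitality} (the fact that unitality is automatic), which was the main effort of \cref{sec 2-Segl decomp}, and in \cref{thm upper path}, which reduced via \cref{prop fewer squares} to a pasting-law argument. I would also add one sentence flagging that the two-sided hypothesis is genuinely necessary: upper 2-Segal does not imply lower 2-Segal (cf.\ the remarks following \cref{def 2-segal} on the independent interest of the two halves), so neither ``$\udec X$ Segal'' nor ``$\ldec X$ Segal'' alone would characterize decomposition spaces. Thus the main ``obstacle'' is not in this corollary at all, but was already dispatched upstream; here it is pure bookkeeping.
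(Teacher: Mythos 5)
Your proposal is correct and is exactly the paper's argument: the corollary is obtained by chaining \cref{thm unitality} with \cref{thm upper path} and its dual (the corollary on lower d\'ecalage), which is why the paper marks it \qed with no further proof. Nothing is missing.
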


When $X$ is a decomposition space, it turns out that the maps $d_\top \colon \udec X \to X$ and $d_\bot \colon \ldec X \to X$ are culf (in the sense of \cref{def culf}); see \cite[Theorem~4.10]{GKT1}.

\section{The edgewise subdivision criterion}\label{sec edgewise}

In this section we explain the following theorem from \cite{BOORS:ESC}.
The proof presented here utilizes the exact same techniques that we have been using since \cref{sec 2-Segl decomp}.
This proof relies on the path space criterion (\cref{cor path space}), which is appropriate as there is a close connection between the edgewise subdivision and the upper/lower d\'ecalage pair.\phnote{Additionally, this theorem can be interpreted in terms of the double-categorical perspective from \cite{BOORS:2SSWC}.}
\begin{theorem}[Bergner, Osorno, Ozornova, Rovelli, Scheimbauer]\label{thm edgewise}
A simplicial space $X$ is a decomposition space if and only if its edgewise subdivision $\sd X$ is Segal.
\end{theorem}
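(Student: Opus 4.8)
The plan is to reduce the edgewise subdivision criterion to the path space criterion (\cref{cor path space}) by relating $\sd X$ to the décalages $\udec X$ and $\ldec X$. First I would recall the combinatorial description of edgewise subdivision: $(\sd X)_n = X_{2n+1}$, with the simplicial structure induced by the functor $\Delta \to \Delta$ sending $[n]$ to $[n]^\op \star [n] \cong [2n+1]$. The key structural observation is that there are two natural ``forgetful'' maps relating $\sd X$ to the décalage of a décalage: concretely, $[n]^\op \star [n] = [n]^\op \star [n-1] \star [0]$ and $[n]^\op \star [n] = [0] \star [n-1]^\op \star [n]$, which should exhibit $\sd X$, as a simplicial space, as sitting inside (or being built from) $\udec \ldec X$ and $\ldec \udec X$. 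I would make this precise by writing down, for each face/degeneracy operator of $\sd X$, which operators of $X_{2n+1}$ they correspond to, and checking that the ``inner'' part of the Segal condition for $\sd X$ is governed exactly by 2-Segal conditions on $X$.

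The main steps, in order: (1) Establish the explicit formulas $(\sd X)_n = X_{2n+1}$, $d_i^{\sd} = d_i d_{2n+1-i}$, $s_i^{\sd} = s_i s_{2n+1-i}$ (up to the usual index bookkeeping), so that $d_\bot^{\sd} = d_0 d_{2n+1}$ and $d_\top^{\sd} = d_n d_{n+1}$. (2) Using \cref{def Segal}\eqref{item pullback Segal}, reduce ``$\sd X$ is Segal'' to: for all $n \geq 1$, the square with corners $X_{2n+3}, X_{2n+1}, X_{2n+1}, X_{2n-1}$ and maps $d_0 d_{2n+3}$, $d_n d_{n+1}$ (suitably shifted) is a pullback. (3) Factor each of these composite maps through an intermediate level and apply the pasting law, so that the Segal square for $\sd X$ at level $n$ decomposes into a grid of smaller squares, each of which is either a décalage-Segal square or a 2-Segal square for $X$. (4) If $X$ is a decomposition space, then by \cref{cor path space} both $\udec X$ and $\ldec X$ are Segal, hence (iterating) $\udec \ldec X$ is Segal, and the grid shows the $\sd X$ squares are pullbacks. (5) Conversely, if $\sd X$ is Segal, run the grid in the other direction — using the pasting law to cancel the known-pullback squares — to extract that the 2-Segal squares \eqref{eq upper and lower 2-Segal} for $X$ are pullbacks, invoking \cref{thm unitality} to conclude $X$ is a decomposition space; here \cref{prop fewer squares} is useful to get away with checking only one $i$ per level. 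Throughout, the opposite-space symmetry $(\sd X)^\op \cong \sd(X^\op)$ lets me handle the upper and lower halves in parallel.

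The main obstacle I anticipate is step (3): getting the bookkeeping exactly right so that the composite maps defining $d_\bot^{\sd}$ and $d_\top^{\sd}$ factor through the correct intermediate simplicial levels, and arranging the resulting grid of squares so that each individual square is visibly one of the known pullbacks (a Segal square for a décalage, a basic degeneracy square, or a 2-Segal square). The difficulty is purely combinatorial — it is the same flavor of argument as in the proof of \cref{prop fewer squares} and \cref{segal implies decomposition}, just with longer chains of face maps — but the indices ($2n+1$, $2n+3$, shifts by $1$ on each side) make it easy to make sign/index errors. A clean way to manage this is to phrase everything through the join decompositions $[2n+1] \cong [n]^\op \star [n]$ at the level of $\Delta$, identify the relevant active-inert pushouts there, and only then translate to face-map formulas; this keeps the argument conceptual and lets the pasting law do the work, rather than verifying simplicial identities by hand.
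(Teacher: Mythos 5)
Your overall strategy for the forward direction is fine: if $X$ is a decomposition space, the Segal square for $\sd X$ at level $n$, written out in terms of $X$, has horizontal maps $d_0d_{2n+3}$, $d_0d_{2n+1}$ (composites of outer faces) and vertical maps $d_{n+1}d_{n+2}$, $d_nd_{n+1}$ (composites of inner faces); you can either decompose it by the pasting law into single-face 2-Segal squares as you propose, or, more directly, observe that it is itself the image of a single active--inert pushout in $\Delta$, so the decomposition space condition applies immediately (this is how \cref{lem decomp implies sd Segal} goes). Your index formulas are off ($d_i^{\sd}=d_{n-i}d_{n+i+1}$, so $d_\bot^{\sd}=d_nd_{n+1}$ and $d_\top^{\sd}=d_0d_{2n+1}$, not the other way around), but that is the bookkeeping you flagged and not the real problem.

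The genuine gap is in your step (5), the converse. Knowing only that $\sd X$ is Segal, the pullback squares you have at your disposal all have corners at the odd levels $X_{2n\pm1}$ with the specific composite maps above. The pasting law cannot ``cancel'' squares in your grid, because cancellation requires the square being cancelled to be \emph{already known} to be a pullback: from ``outer rectangle is a pullback'' plus ``right square is a pullback'' you get the left square, but from the outer rectangle alone you get nothing about its constituents --- and in the converse direction the constituent 2-Segal (or d\'ecalage-Segal) squares are exactly what you are trying to prove. Moreover, pasting squares that live only at odd levels will never produce a square whose corners are consecutive levels $X_{n+1},X_n,\dots$, so a new idea is needed to cross from $\sd X$ back to $X$. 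The paper's mechanism is the second basic tool, retract stability of pullbacks: the Segal square of $\ldec X$ is exhibited as a retract of the Segal square of $\sd X$, with section given by the simplicial map $s_\bot^n\colon X_{n+1}\to X_{2n+1}$ of \eqref{eq ldec to sd} and a retraction built (non-simplicially) from face-map composites such as $d_1^n$ and $d_0d_2^n$ (\cref{lem ldec square as retract}); hence $\ldec X$ is Segal, dually $\udec X$ is Segal via $\sd X=\sd(X^\op)$, and \cref{cor path space} concludes. Your proposal invokes the path space criterion and the op-symmetry, but without a retract/degeneracy argument of this kind (or a substitute for it) the passage from ``$\sd X$ Segal'' to ``both d\'ecalages Segal'' is unsupported, so the converse as written does not go through.
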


As with the d\'ecalage functors, the edgewise subdivision is the result of precomposition with an endofunctor of $\Delta$, namely the one which sends $[n]$ to $[n]^\op \star [n] \cong [2n+1]$.
A map $f \colon [n] \to [m]$ is sent to $g \colon [2n+1] \to [2m+1]$ with
\[
g(t) = \begin{cases}
m - f(n-t) & t \leq n \\
m+1 + f(t-n-1) & n+1 \leq t.
\end{cases}
\]
If we write $[2n+1]$ in the following way
\[
\begin{tikzcd}[sep=small]
  0 \rar & 1 \rar & \cdots \rar & n \\
  0' \uar & 1' \lar & \cdots \lar & n' \lar
\end{tikzcd}
\]
then $g$ can be interpreted as $t' \mapsto f(t)'$ and $t \mapsto f(t)$.
If $X$ is a simplicial space, then the simplicial space $Z = \sd X$ has $Z_n = X_{2n+1}$ and
\begin{align*}
  \big(d_i \colon Z_n \to Z_{n-1}\big) &= \big(d_{n-i} d_{n+i+1} \colon X_{2n+1} \to X_{2n-1}\big) \\
  \big(s_i \colon Z_n \to Z_{n+1}\big) &= \big(s_{n-i} s_{n+i+1} \colon X_{2n+1} \to X_{2n+3}\big).
\end{align*}
One can check that $\sd X = \sd (X^\op)$. 
If $X$ is the nerve of a category $C$, then $\sd X$ is (isomorphic to) the nerve of the \emph{twisted arrow category} of $C$, whose objects are the morphisms of $C$, and where a morphism $f \to g$ is given by a factorization of $g$ as below.
\[
\begin{tikzcd}  
  c' \rar{k} & d' \\
  c \uar{f} & d \lar{h} \uar[swap]{g}.
\end{tikzcd}
\]

One manifestation of the connection between d\'eclage and edgewise subdivision is that iterated bottom and top degeneracies yield simplicial maps
\[ \begin{tikzcd}[sep=small]
\ldec X \rar & \sd X & (\udec X)^\op. \lar
\end{tikzcd} \]
For the map on the left, recall that $\ldec X$ is given by precomposition with the endofunctor $[n] \mapsto [0] \star [n]$. 
The maps $[n]^\op \star [n] \to [0] \star [n]$ collapsing the first component to a point gives a natural transformation of functors, and hence the indicated map.
It is specified by
\begin{equation}\label{eq ldec to sd}
  \ldec X_n = X_{n+1} \xrightarrow{s_\bot^n} X_{2n+1} = \sd X_n.
\end{equation}
Likewise, the leftward arrow is given by
\[
  \sd X_n = X_{2n+1} \xleftarrow{s_\top^n} X_{n+1} = (\udec X)^\op_n,
\]
which arises from the natural transformation $[n]^\op \star [n] \to [n]^\op \star [0]$.

\begin{lemma}\label{lem decomp implies sd Segal}
If $X$ is a decomposition space, then $\sd X$ is Segal.
\end{lemma}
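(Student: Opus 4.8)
The plan is to prove this implication directly from \cref{def decomp space}, by recognising each Segal square of $\sd X$ as the image under $X$ of an active--inert pushout square in $\Delta$; no pasting law or path space machinery is needed for this direction.

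First I would translate the relevant face operators of $\sd X$ back into single maps of $\Delta$, using the formula $d_i = d_{n-i}d_{n+i+1}$ on $(\sd X)_n = X_{2n+1}$. In degree $n+1$ this yields $d_\bot = d_{n+1}d_{n+2}\colon X_{2n+3}\to X_{2n+1}$, which is $\phi^{*}$ for the active map $\phi\colon [2n+1]\ract[2n+3]$ omitting the two middle vertices $n+1,n+2$, and $d_\top = d_0 d_{2n+3}\colon X_{2n+3}\to X_{2n+1}$, which is $\theta^{*}$ for the inert map $\theta\colon[2n+1]\rint[2n+3]$ given by $t\mapsto t+1$. One degree lower, $d_\bot = d_n d_{n+1}$ and $d_\top = d_0 d_{2n+1}$ on $(\sd X)_n = X_{2n+1}$ are $\beta^{*}$ and $\iota^{*}$ for the analogous active map $\beta\colon[2n-1]\ract[2n+1]$ (omit the middle vertices $n,n+1$) and inert map $\iota\colon[2n-1]\rint[2n+1]$, $t\mapsto t+1$. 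Thus the Segal square of $\sd X$ in degree $n$ (\cref{def Segal} \eqref{item pullback Segal}) is exactly what $X$ assigns to the square
\[
\begin{tikzcd}
{[2n-1]} \rar[-act, "\beta"] \dar[tail, "\iota"'] & {[2n+1]} \dar[tail, "\theta"] \\
{[2n+1]} \rar[-act, "\phi"'] & {[2n+3].}
\end{tikzcd}
\]

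Second, I would verify that this square is genuinely the active--inert pushout of the span $(\beta,\iota)$ provided by \cref{lem ai pushout}, rather than merely a commuting square of the right active/inert type. For this I would invoke the explicit description of that pushout (the formulas for its legs $\theta,\phi$ alluded to after \cref{lem ai pushout}): taking the common source to be $[2n-1]$ and both targets to be $[2n+1]$, and with $c=\iota(0)=1$, one gets $p=2n+3$, inert leg $t\mapsto t+c=t+1$ (our $\theta$), and active leg which, after substituting the formula for $\beta$, omits precisely the vertices $n+1,n+2$ (our $\phi$). Since $X$ is a decomposition space, it carries this pushout to a pullback, and this pullback is the Segal square of $\sd X$ in degree $n$. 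As $n\geq 1$ was arbitrary, $\sd X$ is Segal.

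The only real difficulty is bookkeeping: faithfully unwinding the iterated face operators defining $\sd X$, keeping straight which resulting maps of $\Delta$ are active and which inert, and confirming that the commuting square one produces is the active--inert pushout. (One could instead factor $\beta$ through inner cofaces and feed this into the pasting law together with the basic pullback squares of \cref{sec 2-Segl decomp}, but this only reproves a special case of \cref{def decomp space} and is strictly more work. An alternative, presumably no shorter here, would route through the path space criterion \cref{cor path space}, using the maps $\ldec X \to \sd X \leftarrow (\udec X)^\op$.)
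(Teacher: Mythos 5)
Your proof is correct and takes essentially the same route as the paper's: the paper's proof also rewrites the Segal squares of $\sd X$ as squares of iterated face maps of $X$ and observes that each such square is the image of an active--inert pushout in $\Delta$, hence a pullback by \cref{def decomp space}. You merely make explicit the verification (via \cref{lem ai pushout}) that the relevant commuting square really is the pushout, a point the paper asserts without detail.
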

\begin{proof}
Let $Z = \sd X$.
For $n\geq 1$, the following two squares are the same.
\[ \begin{tikzcd}
Z_{n+1} \rar{d_{n+1}} \dar[swap]{d_0}  & Z_n \dar{d_0} 
&  
X_{2n+3} \rar{d_0d_{2n+3}} \dar[swap]{d_{n+1}d_{n+2}}  & X_{2n+1} \dar{d_n d_{n+1}}
\\
Z_n \rar[swap]{d_n} & Z_{n-1}
&  
X_{2n+1} \rar[swap]{d_0d_{2n+1}} & X_{2n-1}
\end{tikzcd} \]
The result follows since the right-hand square is an active-inert pullback square.
\end{proof}

\begin{lemma}\label{lem ldec square as retract}
Let $X$ be a simplicial space and write $Y = \ldec X$, $Z = \sd X$. For each $n \geq 1$, the square below left is a retract of the square below right.
\[ \begin{tikzcd}
Y_{n+1} \rar{d_\bot} \dar[swap]{d_\top} & Y_n \dar{d_\top} 
& Z_{n+1} \rar{d_\bot} \dar[swap]{d_\top} & Z_n \dar{d_\top} 
\\
Y_n \rar[swap]{d_\bot} & Y_{n-1}
&
Z_n \rar[swap]{d_\bot} & Z_{n-1}
\end{tikzcd} \]
Consequently, if $\sd X$ is Segal, so is $\ldec X$.
\end{lemma}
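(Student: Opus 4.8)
The plan is to exhibit the retract by hand and then invoke Retract Stability. For the map $S$ from the left square into the right square I would take the restriction of the simplicial map $\iota\colon\ldec X\to\sd X$ recorded in \eqref{eq ldec to sd}, whose level-$k$ component is the iterated bottom degeneracy $s_\bot^k\colon X_{k+1}\to X_{2k+1}$. Since $\iota$ is a morphism of simplicial spaces it commutes with $d_\bot$ and $d_\top$, so it carries the Segal square of $\ldec X$ at level $n$ into that of $\sd X$ at level $n$; this is $S$, and nothing needs to be checked for it.

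For the retraction $R$, I would attach to each of the four corners of the right-hand square an iterated face operator $X_{2k+1}\to X_{k+1}$ (for the appropriate $k\in\{n-1,n,n+1\}$) splitting the degeneracy $\iota_k=s_\bot^k$ living at that corner. Such a splitting is a composite of $k$ faces, each of which is $d_0$ or $d_1$ of $X$; crucially the composite must be chosen differently on the two legs of the square, and one workable choice uses iterated $d_1$'s at the two corners of one row of the square and iterated $d_1$'s preceded by one $d_0$ at the two corners of the other row. Two things then need checking: that $R S=\id$ at each corner, which reduces to the simplicial identities $d_0 s_0=\id$ and $d_1 s_0=\id$; and that $R$ intertwines the four edge maps of the two Segal squares, which is a short manipulation with the simplicial identities, best packaged as a commutative cube in the exact style of the one in the proof of \cref{lem bot degen}. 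This commuting-cube verification is the only real work, and it can alternatively be carried out by translating each composite of face operators into the corresponding injection of finite ordinals and comparing images.

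The reason the two legs must be handled by genuinely different operators — and the main conceptual point — is that there is no morphism of simplicial spaces $\sd X\to\ldec X$ splitting $\iota$: such a morphism would have to be induced by a natural transformation from the endofunctor $[n]\mapsto[0]\star[n]$ to the endofunctor $[n]\mapsto[n]^\op\star[n]$ of $\Delta$, and a short check on $[0]$ and $[1]$ shows no such natural transformation exists. So the retraction genuinely depends on the individual square, and it must distinguish the codomain of $d_\bot^{\ldec X}$ from the codomain of $d_\top^{\ldec X}$ as they sit inside $\sd X$; the extra $d_0$ on one leg compensates for the fact that $d_\bot$ for $\ldec X$ is the face operator $d_1$, rather than $d_0$, of $X$. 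Granting all this, the concluding assertion is immediate: a Segal space has all of its Segal squares pullbacks by \cref{def Segal} \eqref{item pullback Segal}, and a retract of a pullback is a pullback, so if $\sd X$ is Segal then each Segal square of $\ldec X$, being a retract of one for $\sd X$, is a pullback; that is, $\ldec X$ is Segal.
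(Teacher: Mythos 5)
Your proposal is correct and follows essentially the same route as the paper: the section is the simplicial map of \eqref{eq ldec to sd} with components $s_\bot^k$, and the retraction is taken corner-by-corner (not as a simplicial map), using $d_1^k$ on two corners and $d_1^k d_0$ on the other two -- which, by the simplicial identities, is exactly the paper's $d_0 d_2^k$ -- with the compatibility packaged as a commutative cube and the conclusion drawn from retract stability of pullbacks. The only caveat is the side remark that a simplicial splitting would have to come from a natural transformation of endofunctors of $\Delta$, which is not literally forced for a fixed $X$; but this aside plays no role in the argument.
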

\begin{proof}
The conclusion follows since pullbacks are stable under retracts.
The map from left to right is given by the simplicial map $Y \to Z$ from \eqref{eq ldec to sd}.
We explicitly give the retraction from right to left (which does \emph{not} come from a simplicial map $Z \to Y$).
Utilizing the simplicial identities, all faces in the following cube commute.
\[ \begin{tikzcd}
X_{2n+3} \ar[rr,"d_0d_{2n+3}"] \ar[dd,"d_{n+1}d_{n+2}"'] \ar[dr,"d_0d_2^n" description] & & 
X_{2n+1} \ar[dd, "d_nd_{n+1}"' very near start] \ar[dr,"d_1^n"]  \\
& 
X_{n+2} \ar[rr,"d_{n+2}" near end, crossing over] & & 
X_{n+1} \ar[dd,"d_1"] \\
X_{2n+1} \ar[rr,"d_0d_{2n+1}" very near start] \ar[dr,"d_0d_2^{n-1}"'] & & X_{2n-1} \ar[dr,"d_1^{n-1}" description] \\
& X_{n+1} \ar[rr,"d_{n+1}"] \ar[from=uu, crossing over, "d_1" very near start] & & X_n
\end{tikzcd} \]
In terms of the face maps of $Y$ and $Z$, the front and back faces of the cube are as follows.
\[ \begin{tikzcd}
Z_{n+1} \ar[rr,"d_{n+1}^Z"] \ar[dd,"d_0^Z"'] \ar[dr,"d_0d_2^n" description] & & 
Z_n \ar[dd, "d_0^Z"' very near start] \ar[dr,"d_1^n"]  \\
& 
Y_{n+1} \ar[rr,"d_{n+1}^Y" near end, crossing over] & & 
Y_n \ar[dd,"d_0^Y"] \\
Z_n \ar[rr,"d_n^Z" very near start] \ar[dr,"d_0d_2^{n-1}"'] & & Z_{n-1} \ar[dr,"d_1^{n-1}" description] \\
& Y_n \ar[rr,"d_n^Y"] \ar[from=uu, crossing over, "d_0^Y" very near start] & & Y_{n-1}
\end{tikzcd} \]
We thus see that the $Y$ square is indeed a retract of the $Z$ square since $d_1^n s_0^n = \id_{X_{n+1}} = d_0 d_2^{n-1} s_0^n$ for all $n\geq 1$.
\end{proof}

\begin{lemma}
If $\sd X$ is Segal, so is the upper d\'ecalage $\udec X$.
\end{lemma}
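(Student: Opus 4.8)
The plan is to obtain this as the formal dual of \cref{lem ldec square as retract}, by passing to opposite simplicial spaces. The two bookkeeping identities recorded above do all the work: $\sd X = \sd(X^\op)$ (the evident symmetry coming from $[n]^\op \star [n]$), and $(\udec X)^\op = \ldec(X^\op)$.

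First I would observe that if $\sd X$ is Segal, then $\sd(X^\op)$ is Segal, since by the first identity these are literally the same simplicial space. Applying \cref{lem ldec square as retract} with $X^\op$ in place of $X$ then yields that $\ldec(X^\op)$ is Segal. By the second identity, $\ldec(X^\op) = (\udec X)^\op$, so $(\udec X)^\op$ is Segal.

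Finally I would invoke the fact used in the proof of \cref{segal implies decomposition}: the opposite $Y^\op$ of a Segal space $Y$ is again Segal, because the defining pullback square of \cref{def Segal} \eqref{item pullback Segal} for $Y^\op$ at level $n$ is obtained from the one for $Y$ simply by swapping the names of $d_\bot$ and $d_\top$, and so remains a pullback. Taking $Y = (\udec X)^\op$ gives that $\udec X$ is Segal.

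I do not anticipate any genuine difficulty: all the substance lives in \cref{lem ldec square as retract} and in the already-established identities for $\sd$ and the d\'ecalages, and only the correct tracking of opposites remains. The single point deserving care is that $\sd X$ and $\sd(X^\op)$ agree \emph{on the nose} rather than merely being related by some reindexing, so that the hypothesis ``$\sd X$ is Segal'' transfers unchanged to $X^\op$; this is precisely the assertion $\sd X = \sd(X^\op)$. One could instead argue directly, dualizing the retract constructed in \cref{lem ldec square as retract} by using the iterated top degeneracies $s_\top^n \colon (\udec X)^\op_n = X_{n+1} \to X_{2n+1} = \sd X_n$ in place of $s_\bot^n$, but passing to opposites is shorter.
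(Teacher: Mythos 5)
Your proposal is correct and is essentially the paper's own argument: use $\sd X = \sd(X^\op)$, apply \cref{lem ldec square as retract} to $X^\op$ to see that $\ldec(X^\op) = (\udec X)^\op$ is Segal, and conclude since opposites of Segal spaces are Segal. Your extra care in verifying that last step (the Segal squares for $Y^\op$ are the transposes of those for $Y$) is a reasonable elaboration of what the paper leaves implicit.
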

\begin{proof}
We have $\sd X = \sd (X^\op)$ for any simplicial space $X$. 
By \cref{lem ldec square as retract} we conclude that $\ldec(X^\op) = (\udec X)^\op$ is Segal.
\end{proof}

\begin{proof}[Proof of \cref{thm edgewise}]
The forward direction is \cref{lem decomp implies sd Segal}.
If $\sd X$ is Segal, then the preceding two lemmas show that $\ldec X$ and $\udec X$ are Segal as well, so the path space criterion implies that $X$ is a decomposition space.
\end{proof}


\begin{remark}\label{rmk sd vs dec}
If $X$ is a discrete decomposition space, then the three simplicial sets $\sd X$, $\ldec X$, and $\udec X$ are (nerves of) categories.
We have inclusions of simplicial sets $\ldec X \hookrightarrow \sd X$ and $(\udec X)^\op \hookrightarrow \sd X$.
Identifying each of these subcategories with its image, the category $\sd X$ admits two (strict) factorization systems $(\ldec X, (\udec X)^\op)$ and $((\udec X)^\op, \ldec X)$; see \cite{Hackney:OC2SS} for one explanation.
\end{remark}

There is also an interesting story about culf maps (\cref{def culf}) and edgewise subdivision, which gives the following theorem from \cite{KockSpivak:DSST} (see \cite{HackneyKock:CMES} for a generalization). 

\begin{theorem}[Kock \& Spivak]\label{thm kock spivak}
If $X$ is a discrete decomposition space, then there is an equivalence of categories between the category of culf maps over $X$ and the category of presheaves on $\sd X$.
\end{theorem}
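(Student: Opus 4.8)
The plan is to exhibit mutually inverse functors between the two categories. Throughout I use that $\sd X$ is the nerve of a category $\mathcal{T}$ (\cref{rmk sd vs dec}): its objects are the elements of $X_1$, and its composition is extracted from the Segal structure of $\sd X$ guaranteed by \cref{thm edgewise}. Accordingly I identify a presheaf on $\sd X$ with a functor $F\colon\mathcal{T}^{\op}\to\set$, and — when convenient — with the associated discrete right fibration over $\sd X$ (the nerve of its category of elements).

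The one observation that makes everything run is that the \emph{last-vertex map of the edgewise subdivision is active}: the image under the endofunctor $[n]\mapsto[2n+1]$ of the map $[0]\to[n]$ picking out the top element is the unique active map $\alpha_n\colon[1]\ract[2n+1]$. Consequently, for a simplicial map $f\colon Y\to X$, the map $\sd f\colon\sd Y\to\sd X$ is a discrete right fibration exactly when $f$ is cartesian over every $\alpha_n$, hence in particular when $f$ is culf. This yields the first functor $\Phi(f)\coloneqq\sd f$, viewed as a presheaf on $\sd X$; concretely it is $a\mapsto f_1^{-1}(a)$ for $a\in X_1$, with transition maps coming from the right-fibration structure. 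As a bonus, since the total space of a discrete right fibration over a nerve is again a nerve, hence Segal, \cref{thm edgewise} shows that the domain of any culf map over $X$ is automatically a decomposition space, so the source category is unambiguous.

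The inverse functor $\Psi$ ``un-subdivides'' by remembering only the long edge. Given $F\colon\mathcal{T}^{\op}\to\set$, let $\Psi F$ be the simplicial set with $(\Psi F)_n=\{(x,\xi)\mid x\in X_n,\ \xi\in F(\alpha_n^* x)\}$, where $\alpha_n^* x\in X_1$ is the long edge of $x$, together with the projection $(\Psi F)_n\to X_n$. For $\beta\colon[m]\to[n]$ one sets $\beta^*(x,\xi)=(\beta^* x,\,F(\rho_\beta)\xi)$, where $\rho_\beta$ is the morphism of $\mathcal{T}$ induced by the unique morphism $(\beta(0),\beta(m))\to(0,n)$ in the twisted arrow category of $[n]$; functoriality in $\beta$ is precisely the assertion that these morphisms compose correctly in $\mathcal{T}$, so $\Psi F$ is genuinely a simplicial object. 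The map $\Psi F\to X$ is culf: for an active $\gamma\colon[m]\ract[n]$ one has $(\gamma(0),\gamma(m))=(0,n)$ and $\gamma\alpha_m=\alpha_n$, so $\rho_\gamma=\id$ and $\alpha_m^*\gamma^*=\alpha_n^*$, and then $(\Psi F)_n\to(\Psi F)_m\times_{X_m}X_n$ is visibly a bijection.

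Finally one builds natural isomorphisms $\Phi\Psi\cong\id$ and $\Psi\Phi\cong\id$. The former is immediate since $\alpha_1\colon[1]\ract[1]$ is the identity, so $\Phi\Psi F$ assigns to $a\in X_1$ the set $\{\xi\mid(a,\xi)\in(\Psi F)_1\}=F(a)$; the latter says that $(\Psi\Phi f)_n$ is the fibre product of $\alpha_n^*\colon X_n\to X_1$ and $f_1\colon Y_1\to X_1$, which is $Y_n$ precisely because a culf $f$ is cartesian over $\alpha_n$. I expect the real work — bookkeeping more than a genuine obstacle — to be (i) pinning down $\mathcal{T}$ and its composition law precisely enough from the edgewise subdivision criterion to make the formula for $\Psi F$ well defined, and (ii) verifying that all of the above identifications are natural in morphisms and compatible with the three-way translation among presheaves on $\mathcal{T}$, functors $\mathcal{T}^{\op}\to\set$, and discrete right fibrations over $\sd X$. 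A reader who prefers a structural argument may instead observe that culf maps are closed under pullback and composition and are cut out by finitely many pullback conditions, so that $\mathbf{culf}/X$ is a presheaf topos; the computation above then just identifies its atoms with the representables on $\mathcal{T}$.
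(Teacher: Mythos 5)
The paper does not actually prove \cref{thm kock spivak}: it is stated with a citation to Kock--Spivak, who establish the general ($\infty$-categorical, non-discrete) version. So there is no in-paper argument to compare against; judged on its own, your proposal is essentially correct, and it amounts to a discrete specialization of the cited strategy (edgewise subdivision carries culf maps over $X$ to discrete right fibrations over $\sd X$, with inverse given by a long-edge Grothendieck construction). Your two pivotal observations check out: the image of the last-vertex inclusion $[0]\to[n]$ under $[n]\mapsto[n]^\op\star[n]$ is the unique active map $[1]\ract[2n+1]$, so culfness of $f$ gives precisely the unique-lift-along-the-last-vertex property for $\sd f$ (and, as you note, this also shows the domain of a culf map over $X$ is again a decomposition space, via \cref{thm edgewise}); and both comparison isomorphisms reduce to cartesianness of a culf map over the unique active maps $[1]\ract[n]$, including $n=0$, where the relevant square is the degeneracy square $s_0$ from \cref{def culf}.

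Two of the checks you defer as bookkeeping are genuinely needed, though both go through. First, $\Phi\Psi F\cong F$ must be verified on morphisms of $\mathcal{T}$, not only on objects: for $w\in X_3$ regarded as a morphism $d_0d_3w\to d_1d_2w$, the unique lift of $w$ to $(\Psi F)_3$ with target $(d_1d_2w,\zeta)$ is $(w,\zeta)$, and a short computation with your $\rho_\beta$'s (using that the $1$-simplex of $\sd\Delta[3]$ from $(1,2)$ to $(0,3)$ is the identity of $[3]$) shows its source is $(d_0d_3w, F(w)\zeta)$, so the transition maps agree with $F$. Second, the bijection $Y_n\cong X_n\times_{X_1}Y_1$ underlying $\Psi\Phi f\cong Y$ must be shown simplicial; this uses that for $\beta\colon[m]\to[n]$ the element $\rho_{\beta,y}\in Y_3$ (defined from $y$ exactly as $\rho_{\beta,x}$ is from $x$) is the unique lift of $\rho_{\beta,x}$ with target the long edge of $y$, and its source is the long edge of $\beta^*y$. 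Also worth one sentence: morphisms between culf maps over $X$ are arbitrary maps over $X$, but any such map is automatically culf by the pasting law, so $\Phi$ is well defined and functorial without ambiguity. Finally, the closing ``presheaf topos'' aside is not a proof and should not be leaned on; the explicit pair $(\Phi,\Psi)$ is the argument, and with the verifications above written out it is complete.
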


\section{Free decomposition spaces}\label{sec free}

We conclude by briefly explaining a class of examples.
For simplicity of presentation, in this section we restrict ourselves to the category of simplicial sets, $\sset$. 
This section is based on \cite{HackneyKock:FDS}, which addresses the more general case of simplicial spaces.

\begin{definition}
An \emph{outer face complex} is a presheaf on the category of inert maps, that is, a functor $A \colon \delint^\op \to \set$.
\end{definition}

As the top and bottom coface maps in $\Delta$ generate the subcategory $\delint$, an outer face complex is given by a sequence of sets and maps as displayed (compare to \eqref{diag simplicial object}):
\begin{equation}
\begin{tikzcd}
A_0 & A_1 \lar[shift left, "d_\bot"] \lar[shift right, "d_\top"']  & A_2 \lar[shift left, "d_\bot"] \lar[shift right, "d_\top"']
 &
A_3 
\lar[shift left, "d_\bot"] \lar[shift right, "d_\top"'] \cdots &[-0.5cm] d_\top d_\bot = d_\bot d_\top.
\end{tikzcd} \end{equation}
Let $j\colon \delint \to \Delta$ be the inclusion of the category of inert maps.
An outer face complex then determines a simplicial set by left Kan extension along $j$, which we denote by $X = j_! A$.
A surprising fact is that this simplicial set is always 2-Segal.

\begin{theorem}
If $A$ is an outer face complex, then $X = j_! A$ is a (discrete) decomposition space.
\end{theorem}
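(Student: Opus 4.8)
The plan is to compute the left Kan extension $X = j_! A$ explicitly and then verify the decomposition space condition directly, or — better — to reduce the problem to one of the equivalent criteria already established, ideally the edgewise subdivision criterion (\cref{thm edgewise}) or the path space criterion (\cref{cor path space}). First I would make the colimit formula for the Kan extension concrete: for $[m] \in \Delta$, the set $X_m = (j_! A)_m$ is the colimit of $A \circ (\text{projection})$ over the comma category $j \downarrow [m]$, whose objects are inert maps $\iota \colon [k] \rint [m]$. Since inert maps into $[m]$ are exactly the ``intervals'' $\{c, c+1, \dots, c+k\} \subseteq [m]$, and since any map in $\delint$ between two such is forced, the comma category is a poset (a disjoint union of the posets of sub-intervals, one component for each length), and one computes that $X_m \cong \coprod A_k$ where the coproduct runs over inert maps $[k] \rint [m]$ — equivalently, $X_m = \coprod_{0 \le a \le b \le m} A_{b-a}$, an $m$-simplex being an element of $A_k$ together with a choice of interval of length $k$ recording which ``sub-edge'' it lives on. The face map $\alpha^* \colon X_m \to X_n$ for $\alpha \colon [n] \to [m]$ acts by: take the active-inert factorization of the composite of $\alpha$ with the interval inclusion, apply the active part's action via $A$ (using only the inert part, which it is after factoring) — more precisely, restrict the interval along $\alpha$ and apply the appropriate inert operator of $A$.

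With this description in hand, the cleanest route is to check the upper and lower 2-Segal squares of \cref{def 2-segal} — or, invoking \cref{prop fewer squares}, just one such square per level — directly on the explicit formula for $X$. A simplex of $X_{n+1}$ is an element $x \in A_k$ sitting on an interval $I \subseteq [n+1]$ of length $k$; the two bottom (resp. top) faces in the square remove a boundary vertex, which either shrinks $I$ (if that vertex is an endpoint of $I$) or leaves $x$ untouched and merely relabels $I$ inside a smaller $[n]$. The verification that the relevant square is a pullback then amounts to the bookkeeping observation that an interval in $[n+1]$ together with its image in $[n-1]$ (under two successive outer-face-type collapses) and the two intermediate images in $[n]$ determine the original interval — together with the cocycle-type identity $d_\top d_\bot = d_\bot d_\top$ in $A$ ensuring the element of $A_k$ is unambiguously recovered. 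This is essentially a combinatorial unwinding with no homotopy-theoretic content, which is why restricting to $\sset$ (so that ``pullback'' means ordinary pullback of sets) is harmless.

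Alternatively — and this may be the slicker write-up — one can observe that $\sd X$ or the décalages $\udec X, \ldec X$ have transparent descriptions in terms of $A$. The functor $j \colon \delint \to \Delta$ interacts well with the join-with-$[0]$ endofunctor defining $\udec$, since $[k] \star [0]$ is built from $[k]$ by an inert (top coface) map; one expects $\udec(j_! A)$ to again be a left Kan extension from $\delint$ of a shifted outer face complex, and left Kan extensions of outer face complexes along $j$ should visibly be Segal because the comma categories $j\downarrow[m]$ have the poset-of-intervals form that makes the Segal maps isomorphisms. If this reduction goes through, the theorem follows immediately from \cref{cor path space}. The main obstacle is getting the colimit formula for $j_! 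A$ genuinely correct and functorial — in particular pinning down precisely how a general face operator $\alpha^*$ acts on the ``interval $+$ element of $A$'' presentation, since $\alpha$ need be neither active nor inert and one must track how its active-inert factorization interacts with the chosen interval. Once that naturality is nailed down, whichever of the above routes one picks is routine.
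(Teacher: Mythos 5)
The central problem is your computation of the left Kan extension, which is not correct. For a presheaf $A \colon \delint^\op \to \set$, the pointwise formula computes $(j_!A)_m$ as a colimit over the comma category $(j^\op \downarrow [m]) \cong ([m] \downarrow j)^\op$, whose objects are \emph{all} simplicial operators $[m] \to [k]$ \emph{out of} $[m]$ (with morphisms the inert maps under $[m]$) -- not the inert maps $[k] \rint [m]$ \emph{into} $[m]$, as you assert. By unique active--inert factorization, the discrete subcategory of active maps $[m] \ract [k]$ is initial, so the colimit collapses to $X_m \cong \sum_{\ell_1,\dots,\ell_m \in \mathbb{N}} A_{\ell_1 + \cdots + \ell_m}$: an $m$-simplex is an element $a \in A_N$ together with an ordered decomposition $N = \ell_1 + \cdots + \ell_m$, not an element of $A_k$ together with a choice of interval of length $k$ in $[m]$. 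Your formula already fails in low degrees: it gives $X_1 \cong A_0 \sqcup A_0 \sqcup A_1$ instead of $\sum_{m} A_m$, and for the terminal outer face complex it produces finite sets in each degree rather than $B\mathbb{N}$. Since your verification of the 2-Segal squares is carried out entirely on this incorrect ``interval'' model, it does not establish anything about $j_!A$.

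The alternative route has a second, independent problem: the claim that ``left Kan extensions of outer face complexes along $j$ should visibly be Segal'' proves far too much -- it would apply to $X = j_!A$ itself, which is generally \emph{not} Segal (the decomposition space of words of length at most $N$ in \cref{example words} is exactly such a $j_!A$), and it rests on the same misidentification of the comma category; so the proposed reduction to \cref{cor path space} cannot go through as stated. Your top-level plan (make the Kan extension explicit, then check the squares of \cref{def 2-segal} by hand in $\sset$) is in fact the route taken in the paper, but it only becomes viable after correcting the colimit formula: with simplices of the form $(a, \ell_1, \dots, \ell_k)$, inner faces add adjacent $\ell_i$'s, outer faces discard $\ell_1$ or $\ell_k$ while applying $d_\bot^{\ell_1}$ or $d_\top^{\ell_k}$ to $a$, degeneracies insert zeros, and the pullback conditions then reduce to the elementary bookkeeping you envisage. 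The gap is thus in the computation of $j_!A$, and it propagates through everything built on it.
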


Rather than proving this formally (which is already done in \cite{HackneyKock:FDS}), we instead sketch out the idea about why this might be true.
We begin by unraveling the structure of $X = j_!A$, which we calculate as follows:
\begin{align*}
X_k = j_!A_k &\cong \colim_{([k] \downarrow j)^\op} A_m \\
&\cong\sum_{ [k] \ract [m]} A_m \\
&\cong \sum_{\ell_1, \dots, \ell_k \in \mathbb{N}} A_m & \text{where } m= \ell_1 + \dots + \ell_k.
\end{align*}
The first identification is the standard pointwise colimit formula for the left Kan extension along $j^\op \colon \delint^\op \to \Delta^\op$ (see \cite[X.3]{MacLane:CWM}), along with an isomorphism of categories $(j^\op \downarrow [k]) \cong ([k] \downarrow j)^\op$.
The second identification uses a cofinality argument: the discrete subcategory of $[k] \downarrow j$ on the set of active maps $[k] \ract [m]$ is an initial subcategory. (By the unique factorization in \cref{sec act inert fact}, each active map is an initial object in a connected component of $[k] \downarrow j$.)
Taking opposites makes the inclusion a final functor, so the colimit may instead be computed on this discrete subcategory \cite[IX.3]{MacLane:CWM}.
The last identification simply uses the following bijection:
\begin{align*}
\{ [k] \ract [m] \}  &\cong \{ (\ell_1, \dots, \ell_k) \in \mathbb{N}^k \mid \textstyle  \sum \ell_i = m \} \\
\alpha   &\mapsto (\alpha(1) - \alpha(0), \alpha(2) - \alpha(1), \dots, \alpha(k) - \alpha(k-1)) \\
\textstyle  ( t \mapsto \sum_{i=1}^t \ell_i ) &\mapsfrom (\ell_1, \dots, \ell_k) 
\end{align*}

We now give an indication of the face maps.
Writing an element of $X_k$ as $(a, \ell_1, \dots, \ell_k)$ where $a\in A_m$ (for $m = \sum \ell_i$), the inner face maps are given by adding adjacent natural numbers:
\[
  d_i(a, \ell_1, \dots, \ell_k) = (a, \ell_1, \dots, \ell_i + \ell_{i+1}, \dots, \ell_k) 
\]
for $i=1, \dots, k-1$.
The outer face maps discard an the outer element of the list of natural numbers, and use the structure maps from our original outer face complex $A$:
\begin{align*}
  d_0(a, \ell_1, \dots, \ell_k) &= (d_\bot^{\ell_1} a, \ell_2, \dots, \ell_k) \\
  d_k(a, \ell_1, \dots, \ell_k) &= (d_\top^{\ell_k} a, \ell_1, \dots, \ell_{k-1}).
\end{align*}
Degeneracies are given by inserting a zero into the list of natural numbers.
We leave it to the reader to trace through the identifications above to verify that these formulas are correct.

Formulas for the face maps at hand, it is straightforward to check that $X$ is a 2-Segal set.
For instance the square 
\[ \begin{tikzcd}
X_3 \ar[dr, phantom, "\lrcorner" very near start] \rar{d_2} \dar[swap]{d_0} & X_2 \dar{d_0} \\
X_2 \rar[swap]{d_1} & X_1
\end{tikzcd} \]
is a pullback
since if we have $(a,\ell_1, \ell_2), (b,m_1,m_2) \in X_2$ (with $a\in A_{\ell_1 + \ell_2}$ and $b\in A_{m_1 + m_2}$) such that $d_1(a, \ell_1, \ell_2) = (a, \ell_1 + \ell_2)$ is equal to $d_0(b,m_1, m_2) = (d_\bot^{m_1} b, m_2)$ then the only element $x = (b',k_1, k_2, k_3) \in X_3$ with $d_0 x = (a,\ell_1,\ell_2)$ and $d_2x = (b,m_1,m_2)$ is $x = (b,m_1, \ell_1, \ell_2)$. 
All other pullback squares are verified in a similar manner.

The most basic example is to apply this construction to the terminal outer face complex, where each $A_m = \ast$ is a one-element set.
In this case, \[ X_k = \sum_{\ell_1, \dots, \ell_k \in \mathbb{N}} \ast \,\, \cong \mathbb{N} \times \underset{k}\cdots \times \mathbb{N}.\]
This simplicial set $X$ is isomorphic to $B\mathbb{N}$, the nerve of the monoid of natural numbers.
This example generalizes to other free monoids:

\begin{example}\label{example words}
Fix an alphabet $A_1 \in \set$, and let $A_m$ denote the set of length $m$ words in $A_1$.
The face maps $d_\bot, d_\top \colon A_m \to A_{m-1}$ discard the first or last letters of a word.
Let $X = j_!A$.
The set of vertices $X_0 = A_0$ is a one-point set containing only the empty word.
The set of edges $X_1 = \sum_{m\geq 0} A_m$ is the set of all finite words $(a_1 \dots a_m)$.
We have
\[
  X_2 = \sum_{\ell_1, \ell_2} A_{\ell_1 + \ell_2} = \sum_{\ell_1+\ell_2} A_{\ell_1} \times A_{\ell_2} = \{ (a_1 \dots a_{\ell_1} | a_{\ell_1+1} \dots a_{\ell_1+\ell_2} ) \}
\]
is the set of all words together with a cut point.
The outer face maps forget one side of the cut, while the inner face map forgets the cut point.
This pattern continues, with $X_3$ consisting of words with two cut points, and so on.
This example is Segal -- it is the nerve of the free monoid on $A_1$.
However, we can tweak it by fixing a positive integer $N$ and considering a sub-outer face complex $A'$ consisting only of words of length at most $N$:
\begin{equation*}\label{eq truncation} 
\begin{tikzcd}[row sep=small]
A: &[-0.5cm] A_0 & 
A_1 \lar[shift left, "d_\bot"] \lar[shift right, "d_\top"'] & 
A_2 \lar[shift left, "d_\bot"] \lar[shift right, "d_\top"'] 
\rar[phantom, "\cdots"]  &[-0.5cm] 
A_{N-1} &
A_{N} \lar[shift left, "d_\bot"] \lar[shift right, "d_\top"'] &
A_{N+1} \lar[shift left, "d_\bot"] \lar[shift right, "d_\top"'] \cdots
\\
A': &[-0.5cm] A_0 & 
A_1 \lar[shift left, "d_\bot"] \lar[shift right, "d_\top"'] & 
A_2 \lar[shift left, "d_\bot"] \lar[shift right, "d_\top"'] 
\rar[phantom, "\cdots"]  &[-0.5cm] 
A_{N-1} &
A_{N} \lar[shift left, "d_\bot"] \lar[shift right, "d_\top"'] &
\varnothing \lar[shift left] \lar[shift right] \cdots
\end{tikzcd} \end{equation*}
Then $X' = j_!A'$ is the decomposition space of words of length at most $N$, which is not Segal.
\end{example}

Notice that the decomposition space of words of length at most $N$ also falls under the setting of \cref{ex partial monoid}, by letting $M$ be the subset of the free monoid on $A_1$ with length bounded by $N$.

\begin{example}\label{ex dir graph}
Let $G$ be a directed graph, such as the following.
\[ \begin{tikzcd}[row sep=tiny]
&[-0.2cm] \bullet \rar[bend left,"b"] \rar[bend right,"c"] & \bullet \ar[lld, bend left, "d"]  \\
\bullet \ar[ur, bend left,"a"] \ar[in=200, out=140, loop,"e"']
\end{tikzcd} \]
The associated outer face complex is the one where $A_n$ consists of length $n$ paths in the graph.
Functionally, this is a modification of \cref{example words} where the words are restricted to only allow certain length 2-subwords; for instance $ac$ and $ee$ are acceptable subwords but $bc$ and $eb$ are not.
\end{example}

We leave many other interesting examples to \cite{HackneyKock:FDS}, and conclude with a statement of the main theorem of that paper.

\begin{theorem}
The category of outer face complexes is equivalent to the category of culf maps over $B\mathbb{N}$.
\end{theorem}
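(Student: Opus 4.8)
The plan is to construct an explicit functor $\Phi$ from outer face complexes to simplicial sets over $B\mathbb{N}$, show it lands among the culf maps, and then build an inverse. First I would note that $B\mathbb{N} = j_! \mathbf{1}$, where $\mathbf{1}$ is the terminal outer face complex; more generally the unique map $A \to \mathbf{1}$ of outer face complexes induces $j_! A \to j_! \mathbf{1} = B\mathbb{N}$, so $\Phi(A) \coloneqq (j_! A \to B\mathbb{N})$ is well-defined and functorial. Using the description $X_k = \sum_{\ell_1,\dots,\ell_k} A_{\ell_1+\cdots+\ell_k}$ from the computation above, the structure map to $(B\mathbb{N})_k = \mathbb{N}^k$ is simply $(a,\ell_1,\dots,\ell_k) \mapsto (\ell_1,\dots,\ell_k)$.

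The first substantive step is to check $\Phi(A)$ is culf, i.e.\ that the squares of \cref{def culf} are pullbacks. For the inner face $d_i$ ($0<i<k$) this amounts to observing that the fiber of $(j_!A)_k \to (B\mathbb{N})_k$ over $(\ell_1,\dots,\ell_k)$ is $A_{\sum \ell_i}$, the fiber of $(j_!A)_{k-1} \to (B\mathbb{N})_{k-1}$ over $d_i(\ell_1,\dots,\ell_k) = (\dots,\ell_i+\ell_{i+1},\dots)$ is $A_{\sum\ell_i}$ as well, and $d_i$ on $j_!A$ is the identity on the $A$-coordinate — so the square is a pullback fiberwise. The degeneracy squares are handled the same way, since inserting a zero also does not change $\sum \ell_i$. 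This is routine but is the place where the specific formulas for the face maps of $j_!A$ (inner faces add adjacent entries, degeneracies insert zeros) do the work.

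The second, harder step is to produce the inverse: given a culf map $f \colon Y \to B\mathbb{N}$, recover an outer face complex. The natural guess is $A_m \coloneqq$ the fiber of $f_1 \colon Y_1 \to (B\mathbb{N})_1 = \mathbb{N}$ over $m\in\mathbb{N}$, with $d_\bot, d_\top \colon A_m \to A_{m-1}$ induced by the two maps $[1]\rint[2]$ composed appropriately — more precisely, one uses that for $m\geq 1$ the point $m\in\mathbb{N}=(B\mathbb{N})_1$ has a canonical preimage $(1,m-1)$ (resp.\ $(m-1,1)$) in $(B\mathbb{N})_2=\mathbb{N}^2$, together with the culf pullback squares over these preimages, to define structure maps $A_m \to A_{m-1}$; the outer-face complex relation $d_\top d_\bot = d_\bot d_\top$ comes from the coincidence of the two iterated preimages in $(B\mathbb{N})_3$. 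One then shows $Y\cong j_!A$ over $B\mathbb{N}$ by checking levelwise: because $f$ is culf, $Y_k$ decomposes as $\sum_{(\ell_1,\dots,\ell_k)\in\mathbb{N}^k}$ (fiber of $f_k$ over $(\ell_1,\dots,\ell_k)$), and iterated use of the culf pullback squares identifies each such fiber with $A_{\ell_1+\cdots+\ell_k}$ — this is essentially the Segal-type decomposition of $B\mathbb{N}$ applied through the cartesian (on actives) map $f$. I expect \textbf{this reconstruction and the verification of naturality/invertibility} to be the main obstacle: one must check that the assignment is functorial in $f$, that the two round-trips are naturally isomorphic to the identity, and in particular that a culf map is determined by its restriction to $\delint$-data — which is exactly the heart of the statement and parallels the left-Kan-extension computation already carried out above. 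Since the paper cites \cite{HackneyKock:FDS} for the full treatment, I would at this level of detail present the functors and the key pullback identifications, and defer the bookkeeping for the equivalence to that reference.
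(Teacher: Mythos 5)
The paper itself does not prove this statement -- it is quoted as the main theorem of \cite{HackneyKock:FDS} -- so your proposal can only be compared with that reference and with the tools already set up in \cref{sec free}. Your outline is correct as far as it goes and is the natural hands-on route: the functor $A \mapsto (j_!A \to B\mathbb{N})$, the fiberwise check that it is culf (inner faces and degeneracies preserve $\sum \ell_i$ and are the identity on the $A$-coordinate), and the reconstruction of $A_m$ as the fiber of $Y_1 \to \mathbb{N}$ with $d_\bot, d_\top$ defined via the culf squares over $(1,m-1)$ and $(m-1,1)$ all match the explicit formulas for $j_!A$ developed in the section. But be aware that the step you defer is exactly where the content lies: one must show that the levelwise identification of $Y_k$ with $\sum_{\ell_1,\dots,\ell_k} A_{\ell_1+\cdots+\ell_k}$ (transporting each fiber along the inner faces realizing the active map $[1]\ract[k]$) is compatible with \emph{all} simplicial operators -- in particular that $d_0$ on the fiber over $(\ell_1,\dots,\ell_k)$ becomes $d_\bot^{\ell_1}$, which requires an induction using culf squares over refinements of $\ell_1$ into $1+\cdots+1$ -- together with naturality of the two round trips. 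This is bookkeeping rather than an obstruction in the simplicial-set setting of this section, but it is the theorem; note also that \cite{HackneyKock:FDS} proves the space-valued version, where such a naive fiber-by-fiber argument needs genuine $\infty$-categorical care.

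There is also a much shorter route, essentially the one taken by the reference, using a result already stated in this paper: by \cref{thm kock spivak} (and its generalization in \cite{HackneyKock:CMES}), culf maps over the discrete decomposition space $B\mathbb{N}$ form a category equivalent to presheaves on $\sd B\mathbb{N}$, and $\sd B\mathbb{N}$ is the nerve of the twisted arrow category of the monoid $(\mathbb{N},+)$, which is isomorphic to $\delint$: an inert map $[n]\rint[m]$ is precisely a pair $(a,b)\in\mathbb{N}^2$ with $a+n+b=m$, and composition adds offsets. Presheaves on $\delint$ are exactly outer face complexes, so the theorem follows. Your direct construction buys explicit formulas and makes the equivalence concrete; the twisted-arrow route buys brevity and is the one that scales to simplicial spaces.
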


The theorem says, roughly, that outer face complexes may be identified with decomposition spaces $X$ equipped with a well-behaved notion of (integer-valued) edge length.
One can classify the CULF maps over $B\mathbb{N}$ whose domains are Segal; these are precisely those categories which are free on a directed graph \cite[\S1]{Street:CS}.
The corresponding outer face complex is exactly the one from \cref{ex dir graph} (see \cite[4.1]{HackneyKock:FDS}).

\bibliographystyle{amsplain}
\bibliography{decomp}
\end{document}